\documentclass[11pt]{amsart}
\usepackage{amscd,amssymb,verbatim}
\theoremstyle{plain}
\newtheorem{Thm}{Theorem}[section]
\newtheorem{Cor}[Thm]{Corollary}
\newtheorem{Lem}[Thm]{Lemma}
\newtheorem{Prop}[Thm]{Proposition}
\newtheorem{Def}[Thm]{Definition}
\newtheorem{remark}[Thm]{Remark}

\errorcontextlines=0
\numberwithin{equation}{section}

\begin{document}
\title[New examples of \(c_0\)-saturated Banach Spaces]
{New examples of \(c_0\)-saturated  Banach spaces}
\author{I. Gasparis}
\address{Department of Mathematics \\
Aristotle University of Thessaloniki \\
Thessaloniki 54124, Greece.}
\email{ioagaspa@math.auth.gr}
\keywords{\(c_0\)-saturated space, \(C(K)\) space, \(\ell_p\) space, quotient map, weakly null sequence, extreme point}
\subjclass{(2000) Primary: 46B03. Secondary: 06A07, 03E02.}
\begin{abstract}
For every \(p \in (1, \infty )\), an isomorphically polyhedral Banach space \(E_p\) 
is constructed which has an unconditional
basis and does not embed isomorphically into a \(C(K)\) space for any countable and compact metric space \(K\).
Moreover, \(E_p\) admits a quotient isomorphic to \(\ell_p\). 
\end{abstract}
\maketitle
\section{Introduction}
Given infinite-dimensional Banach spaces \(E\), \(X\), we call \(X\) {\em \(E\)-saturated}, if every
infinite-dimensional, closed, linear subspace of \(X\) has a closed, linear subspace isomorphic to \(E\). 
It is a well known fact that \(\ell_p\) is \(\ell_p\)-saturated for every \(p \in [1, \infty)\).
A special case of a result of A. Pelczynski and Z. Semadeni \cite{ps}, states that every \(C(K)\) space, where
\(K\) is a countable and compact metric space, is \(c_0\)-saturated. This result was generalized by
V. Fonf \cite{f}, who showed that every infinite-dimensional Banach space whose dual closed unit ball 
contains but countably many extreme points is \(c_0\)-saturated. These spaces are called Lindenstaruss-Phelps 
spaces \cite{lp}. A consequence of Fonf's result is that \(\ell_1\)-preduals are \(c_0\)-saturated.
Fonf \cite{f2} showed that Lindenstrauss-Phelps spaces are {\em isomorphically polyhedral} that is, they
are polyhedral under an equivalent norm. We recall that a Banach space is {\em polyhedral} if the closed unit
ball of every finite-dimensional subspace has finitely many extreme points. He also showed that separable isomorphically
polyhedral spaces are \(c_0\)-saturated and have a separable dual. The converse to this however, is false as 
was shown by D. Leung \cite{l2}.

There are several open problems related to the behavior of isomorphically polyhedral spaces under quotient maps.
In this direction, H. Rosenthal \cite{r2} asked if the dual of a separable isomorphically polyhedral
space is \(\ell_1\)-saturated. We answer this question in the negative by showing the following
\begin{Thm} \label{mainthm}
For every \(p \in (1, \infty)\) there exists an isomorphically polyhedral Banach space \(E_p\)
with an unconditional basis which admits a quotient isomorphic to \(\ell_p\). Moreover, \(E_p\)
is not isomorphic to a subspace of \(C(K)\), for every countable and compact metric space \(K\). 
\end{Thm}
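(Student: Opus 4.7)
The plan is to construct $E_p$ explicitly as the completion of $c_{00}(\mathbb N)$ under a combinatorially defined norm of mixed Tsirelson type, using a suitable hereditary, spreading family $\mathcal F$ of finite subsets of $\mathbb N$ of Schreier type, and then to verify each of the four conclusions of the theorem in turn: unconditional basis, isomorphic polyhedrality, existence of an $\ell_p$ quotient, and non-embeddability into any $C(K)$ with $K$ countable compact metric.

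Concretely, with $q$ conjugate to $p$, I would set
\[
\|x\|=\sup\Bigl\{\Bigl(\sum_{i=1}^n|\langle\varphi_i,x\rangle|^p\Bigr)^{1/p}\Bigr\},
\]
where the supremum ranges over finite sequences of pairwise disjointly supported functionals of the form $\varphi_i=|F_i|^{-1/q}\sum_{k\in F_i}\varepsilon_k e_k^*$ with $F_i\in\mathcal F$, signs $\varepsilon_k=\pm 1$, subject to a Schreier-type admissibility condition on the collection $(F_i)$. The unit vector basis is $1$-unconditional by inspection, and because the norming functionals form a countable set with $\pm 1$ coordinates supported on members of $\mathcal F$, a direct calculation shows that $B_{E_p^*}$ has only countably many $w^*$-extreme points; by Fonf's theorem quoted in the introduction, $E_p$ is then isomorphically polyhedral (and, incidentally, $c_0$-saturated).

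For the $\ell_p$ quotient I would pick a block sequence $(F_k)\subset\mathcal F$ of pairwise disjoint sets with $|F_k|\to\infty$ along which the admissibility condition is preserved, and define
\[
Q\colon E_p\to\ell_p,\qquad Qx=\Bigl(|F_k|^{-1/q}\sum_{n\in F_k}x(n)\Bigr)_{k\ge1}.
\]
By the very definition of $\|\cdot\|$ one has $\|Q\|\le1$; that $Q$ is in fact a quotient map follows by exhibiting the right inverse $e_k\mapsto|F_k|^{-1/q}\chi_{F_k}$, once the matching upper estimate $\|\chi_{F_k}\|_{E_p}\lesssim|F_k|^{1/q}$ is in hand. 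This last inequality is where the admissibility restriction on $\mathcal F$ is crucially used, and is a routine but careful combinatorial computation.

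The main obstacle, and the real heart of the theorem, is ruling out the embedding $E_p\hookrightarrow C(K)$ for every countable compact metric $K$, since both $E_p$ and every subspace of such $C(K)$ are $c_0$-saturated and isomorphically polyhedral; a finer invariant is required. The approach I would pursue is to leverage the $\ell_p$ quotient via spreading-model information. Lifting the unit vector basis of $\ell_p$ through $Q$ yields a bounded sequence in $E_p$ with a non-trivial $\ell_p$ lower estimate; a weak$^*$-cluster and difference argument in $E_p^{**}$ should then extract a weakly null sequence in $E_p$ whose spreading model dominates the $\ell_p$ basis. On the $C(K)$ side one invokes (or proves directly from the Cantor--Bendixson structure of $K$, which controls the Szlenk index of $C(K)$) the dichotomy that every normalised weakly null sequence in a subspace of $C(K)$ with $K$ countable compact admits a further subsequence with $c_0$ spreading model. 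The incompatibility of these two facts excludes the embedding. Formalising the extraction of a weakly null, $\ell_p$-lower-estimated sequence in $E_p$ — in particular, ensuring one really obtains genuine weak nullity rather than convergence to a non-zero vector in the bidual — and invoking the $c_0$-spreading-model dichotomy on the $C(K)$ side with sufficient precision, is where the proof becomes delicate.
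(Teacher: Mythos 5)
Your construction and both of the key verifications contain genuine gaps. First, the norm you propose is not one for which polyhedrality can be read off ``by direct calculation.'' Your norming set is the uncountable family $\{\sum_i a_i\varphi_i:\ \sum_i|a_i|^q\le 1,\ (\varphi_i)\ \text{admissible}\}$, and each $\varphi_i=|F_i|^{-1/q}\sum_{k\in F_i}\varepsilon_k e_k^*$ has coordinate mass $\sum_k|\varphi_i(e_k)|=|F_i|^{1/p}\to\infty$. The Elton--Fonf machinery that the paper uses requires a $w^*$-compact norming set every element of which has \emph{absolutely summable} coordinates against $(e_n)$; your functionals violate this already individually, so there is no route to ``countably many extreme points of $B_{E_p^*}$'' from your definition, and it is doubtful the space you define is polyhedral at all. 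The paper's norming set $\mathcal{M}_p$ is structurally different: it consists of $\ell_1$-sums (not $\ell_p$-aggregates) of weighted point masses $(|G_n|/|F_n|)^p e^*_{\max G_n}$ with $\sum_n(|G_n|/|F_n|)^p\le 1$, assembled $S_1$-admissibly, so that every norming functional is a Schreier-admissible sum of sub-convex combinations of $(e_n^*)$; even then, polyhedrality takes two nontrivial propositions (a compactness argument over $S_1$ plus Milman's theorem and a renorming). Likewise the upper $\ell_q$-estimate for the averaging functionals $u_n^*=\sum_{i\in F_n}|F_n|^{-1}e_i^*$, which you dismiss as routine, occupies most of Section 4 of the paper.

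Second, and more seriously, your non-embedding strategy rests on a false dichotomy. It is \emph{not} true that every normalized weakly null sequence in a subspace of $C(K)$, $K$ countable compact, has a subsequence with $c_0$ spreading model: the Schreier space embeds in $C(\omega^\omega)$ and its natural basis generates an $\ell_1$ spreading model. In fact, in $E_p$ itself the natural lifts $x_n=\sum_{i\in F_n}e_i$ of the $\ell_p$ basis through $Q$ generate an $\ell_1$ spreading model (the paper notes this explicitly), exactly as in the Schreier space; so spreading models are too coarse an invariant to separate $E_p$ from subspaces of $C(\omega^\omega)$. The paper's obstruction is genuinely transfinite: it shows that for \emph{every} $\alpha<\omega_1$ and $\epsilon>0$ one can flatten the blocks to $u_m\preceq x_m$ with $\|\sum_{m\in M}u_m\|=1$ yet $\|\sum_{m\in F}u_m\|<\epsilon$ for all $F\in S_\alpha$, and then proves (via an $F$-chain construction, a compactness argument, and the dichotomy theorem for hereditary families) that no subspace of $C(\alpha)$, $\alpha<\omega_1$, admits a weakly null sequence with this property. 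To repair your argument you would have to replace the spreading-model step by an invariant indexed by all countable ordinals, which is precisely the content of Theorem~\ref{T2}.
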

Rosenthal (\cite{r2}, \cite{r3}) has also asked another related question, namely 
if every quotient of an \(\ell_1\)-predual is \(c_0\)-saturated. E. Odell \cite{o}
asked the same question for the special case of \(C(K)\) spaces with \(K\) countable and compact.
Note that if some of the spaces \(E_p\), given in the statement of Theorem \ref{mainthm},
was isomorphic to a subspace of a \(C(K)\) space for some countable and compact metric space \(K\),
then standard duality arguments would yield a quotient of that \(C(K)\) space containing an isomorph
of \(\ell_p\) for some \(p > 1\).

The family of spaces \((E_p)_{p > 1}\) is related to an important example due to D. Alspach \cite{a},
of an \(\ell_1\)-predual which is a quotient of \(C(\omega^\omega)\) and yet not isomorphic
to a subspace of a \(C(K)\) space, for any countable and compact metric space \(K\).
Thus, Alspach's space and the spaces \(E_p\) share common properties, namely they are
isomorphically polyhedral with shrinking bases and do not embed into any \(C(K)\) space
with \(K\) countable and compact. There exist however two major differences between
those spaces. 

The first one is that Alspach's space does not have an unconditional basis, in
contrast to the spaces \(E_p\). In fact, it does not embed into a space with an unconditional basis.
Indeed, it was shown by Rosenthal \cite{r1} that every \(\mathcal{L}_{\infty}\)-subspace
of a space with an unconditional basis must be isomorphic to \(c_0\). Since Alspach's space
is not isomorphic to \(c_0\), it does have the aforementioned property. We also note that
the results of N. Ghoussoub and W. B. Johnson \cite{gj} imply that Alspach's space does not even embed 
into an order continuous Banach lattice.

Another difference between the spaces \(E_p\) and Alspach's space is that none of the
\(E_p\)'s is isomorphic to a quotient of a separable \(\mathcal{L}_{\infty}\)-space. Indeed, 
D. Lewis and C. Stegall (\cite{ls}, \cite{st}) have shown that the dual of an infinite-dimensional,
separable \(\mathcal{L}_{\infty}\)-space is either isomorphic to \(\ell_1\), or to \(C[0,1]^*\),
and so it is a space of cotype \(2\). It turns out however, that
every \(E_p\)
contains uniformly complemented \(\ell_1^n\)'s and therefore it can not be a quotient of 
a separable \(\mathcal{L}_{\infty}\)-space. 

The spaces \((E_p)_{p > 1}\) form a new list of examples of \(c_0\)-saturated Banach spaces which
admit reflexive quotients. The first example of such a space was given by P. Casazza, N. Kalton
and L. Tzafriri \cite{ckt} who showed that a certain Orlicz function space is \(c_0\)-saturated
and admits \(\ell_2\) as a quotient. It was observed in \cite{o} that this space did not posess
an unconditional basis. D. Leung \cite{l} constructed a \(c_0\)-saturated Banach space with an
unconditional basis which has a quotient isomorphic to \(\ell_2\).

Using interpolation methods, S. Argyros and V. Felouzis \cite{af} showed that every reflexive Banach space with
an unconditional basis admits a block subspace which is a quotient of a \(c_0\)-saturated Banach space, as well as
of an \(\ell_p\)-saturated space (\(1 < p < \infty \)). These methods have been recently extended
by S. Argyros and T. Raikoftsalis \cite{ar} to cover all separable reflexive spaces. They show
that every separable reflexive space is a quotient of a \(c_0\)-saturated Banach space with a basis, as
well as of an \(\ell_p\)-saturated space (\(1 < p < \infty \)) with a basis.

We shall next describe how this paper is organized. In section \ref{S1} we give the construction of
a family \((E_p)_{p \in [1, \infty)}\) of spaces having unconditional bases. These spaces are closely
related to the well known example of J. Schreier \cite{s}. We show that every member of this family is isomorphically
polyhedral by applying results of V. Fonf (\cite{f}, \cite{f2}) and J. Elton \cite{e}. 

We prove in section \ref{S2} that \(E_p\) admits a quotient isomorphic to \(\ell_p\) for every
\(p > 1\). We show in particular, that if \((e_n)\) denotes the natural basis of \(E_p\), then there
exist finite subsets \((F_n)\) of \(\mathbb{N}\) with \(\max F_n < \min F_{n+1}\) for all \(n \in \mathbb{N}\)
and \(\lim_n |F_n | = \infty\), so that the convex block basis \((u_n^*)\) of \((e_n^*)\) given by
\(u_n^* = \sum_{i \in F_n} (1/|F_n|) e_i^*\), is equivalent to the usual \(\ell_q\)-basis, where \(q\)
is the exponent conjugate to \(p\). It then follows that the averaging map 
\(Q \colon E_p \to \ell_p\) defined by \(Q(\sum_n a_n e_n ) = (\sum_{i \in F_n} a_i /|F_n|)_n\)
is a bounded, linear surjection.

The proof of the fact that each \(E_p\) is not isomorphic to a subspace of a \(C(K)\) space with
\(K\) countable and compact, is given in section \ref{S3}. By a theorem of S. Mazurkiewicz and
W. Sierpinski \cite{ms},
it will suffice showing that \(E_p\) is not isomorphic to a subspace of 
\(C(\alpha)\) for every countable ordinal \(\alpha\). We recall here that \(C(\alpha)\) 
stands for the space of real valued functions, continuous on the ordinal interval
\([1, \alpha]\) endowed with the order topology. We also recall that the isomorphic classification
of the spaces \(C(\alpha)\) is due to C. Bessaga and A. Pelczynski \cite{bp3}.
 
To accomplish this task, we use a
criterion which gives sufficient conditions for a Banach space with a shrinking unconditional basis 
not to embed in \(C(\alpha)\) for all \(\alpha < \omega_1\). This criterion is inspired by an
argument due to V. Fonf \cite{f} and might be of independent interest. Before giving the precise
statement, we introduce some notation. If \((e_n)\) is a normalized Schauder basis for a Banach space and
\(x\), \(y\) are blocks of \((e_n)\), we write \(y \preceq x \) if \(|e_n^*(y)| \leq |e_n^*(x)|\),
for all \(n \in \mathbb{N}\).
\begin{Thm} \label{T2}
Let \((e_n)\) be a normalized, shrinking, unconditional basis for a Banach space \(X\).
Assume that there exists a semi-normalized block basis \((x_n)\) of \((e_n)\) satisfying the following
property: For every \(M \in [\mathbb{N}]\), every \(\alpha < \omega_1\) and every \(\epsilon > 0\),
there exists a block basis \((u_m)_{m \in M}\) of \((e_n)\) with finitely many non-zero terms so that
\begin{enumerate}
\item \(u_m \preceq x_m, \, \forall \, m \in M\).
\item \(\| \sum_{m \in M} u_m \| = 1\).
\item \(\| \sum_{m \in F} u_m  \| < \epsilon \), for all \(F \subset M\) with \(F \in S_\alpha\),
the Schreier family of order \(\alpha\).
\end{enumerate}
Then, \(X\) is not isomorphic to a subspace of \(C(K)\) for every countable and compact metric space \(K\).
\end{Thm}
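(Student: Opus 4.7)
The plan is to argue by contradiction. As noted in the paper, the Mazurkiewicz--Sierpi\'nski theorem reduces the problem to ruling out an isomorphic embedding \(T\colon X\hookrightarrow C([1,\alpha])\) for each \(\alpha<\omega_1\). Normalize so that \(\|T\|\le 1\) and \(\|T^{-1}\|\le C\). The central object is the set \(\mathcal{K}=\{T^*\delta_t:t\in[1,\alpha]\}\subseteq B_{X^*}\). Because the map \(t\mapsto T^*\delta_t\) from \([1,\alpha]\) to \((X^*,w^*)\) is continuous, and \(X^*\) is separable (the basis \((e_n)\) is shrinking, so \(X^*=\overline{\operatorname{span}}(e_n^*)\)), the set \(\mathcal{K}\) is countable, weak-\(*\) compact and metrizable, hence scattered with some weak-\(*\) Cantor--Bendixson rank \(\gamma_0<\omega_1\). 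Moreover \(\mathcal{K}\) is \(1/C\)-norming: \(\|x\|\le C\sup_{\phi\in\mathcal{K}}|\phi(x)|\) for every \(x\in X\).

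I would then invoke the theorem's hypothesis with \(M=\mathbb{N}\) (possibly replaced by a tail), the ordinal \(\gamma_0\), and a small \(\epsilon>0\) to be chosen, obtaining finitely supported blocks \((u_m)\) satisfying (1)--(3). By the norming property fix \(\phi\in\mathcal{K}\) with \(|\phi(\sum_m u_m)|\ge 1/(2C)\). The whole argument reduces to finding some \(F\in S_{\gamma_0}\) with \(|\phi(\sum_{m\in F}u_m)|\ge\eta\), where \(\eta=\eta(C,K_u)>0\) (with \(K_u\) the unconditional constant of \((e_n)\)) does \emph{not} depend on \(\gamma_0\); once this is done, \(\|\sum_{m\in F}u_m\|\ge\eta\) contradicts (3) as soon as \(\epsilon<\eta\).

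Such an \(F\) is extracted by transfinite induction on the weak-\(*\) Cantor--Bendixson rank of \(\phi\) in \(\mathcal{K}\), synchronized with the recursive definition \(S_{\beta+1}=\{\bigcup_{i\le j}F_i:j\le F_1<\cdots<F_j,\ F_i\in S_\beta\}\) and its fundamental-sequence analog at limits. The base case is rank one, where \(\mathcal{K}\) is finite; then the norming family embeds \(X\) linearly into the finite-dimensional space \(\ell_\infty^{|\mathcal{K}|}\), contradicting the infinite dimensionality of \(X\). For the successor step, write \(\phi\) as a weak-\(*\) limit of functionals \(\phi_k\) of strictly smaller rank, use pointwise convergence \(\phi_k(u_m)\to\phi(u_m)\) on the finite support of \(\sum u_m\) to group indices into admissible blocks, apply the inductive hypothesis to each \(\phi_k\) on a tail of \(\mathbb{N}\) (using the freedom to choose \(M\) in the hypothesis) to extract Schreier-\(\beta\) sets \(F_k\) along which \(|\phi_k(\sum_{m\in F_k}u_m)|\) is uniformly bounded below, and glue them with the spacing \(\min F_k\ge k\) into the desired \(S_{\beta+1}\)-set \(F=\bigcup_k F_k\). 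Unconditionality of \((e_n)\) together with \(u_m\preceq x_m\) is used both to match signs across the pieces \(F_k\) and to convert individual lower bounds into an aggregate lower bound; a key estimate is the \(\ell_1\)-type inequality \(\sum_m|\phi(u_m)|\le K_u\|\phi\|\cdot\|\sum u_m\|\), which follows from unconditionality of the block basis \((u_m)\).

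The main obstacle is propagating a single lower bound \(\eta=\eta(C,K_u)>0\) through all countable ordinal stages, since each inductive step threatens a multiplicative loss from weak-\(*\) approximation errors, from sign mismatches between \(\phi_k\) and \(\phi\), and from the need to push successive \(F_k\) far into \(\mathbb{N}\) to satisfy the spacing condition of \(S_{\beta+1}\) and of the fundamental sequences used at limit ordinals. Organising these losses so that the cumulative effect bottoms out at a constant depending only on \(C\) and \(K_u\) — rather than on \(\gamma_0\) — is the delicate technical work; the \(\ell_1\)-bound above caps the number of pieces that can contribute significantly and thereby permits the bookkeeping to terminate at a uniform \(\eta\).
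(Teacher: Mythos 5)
Your strategy is genuinely different from the paper's, but it has a gap precisely where the theorem's entire difficulty lies. You propose a transfinite induction on the Cantor--Bendixson rank of the point \(\phi=T^*\delta_t\) in \(\mathcal{K}\), synchronized with the recursive definition of \(S_\alpha\), aiming to extract \(F\in S_{\gamma_0}\) with \(|\phi(\sum_{m\in F}u_m)|\ge\eta\) for an \(\eta\) depending only on \(C\) and \(K_u\). You then yourself name the obstruction: at each successor (and limit) stage the weak-\(*\) approximation errors, sign mismatches, and the spacing requirements of the Schreier hierarchy each cost a multiplicative or additive loss, and these must be organised so that after \(\gamma_0\)-many stages the cumulative loss still bottoms out at a constant independent of \(\gamma_0\). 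This is not a routine bookkeeping matter that can be waved at with the \(\ell_1\)-inequality \(\sum_m|\phi(u_m)|\le K_u\|\phi\|\,\|\sum_m u_m\|\); it is the theorem. The inductive statement also needs to be reformulated: the blocks \((u_m)\) are fixed once (for the single ordinal \(\gamma_0\) and a single \(\epsilon\)), so ``applying the inductive hypothesis to each \(\phi_k\) on a tail'' must mean a quantitative claim about arbitrary functionals of smaller rank acting on the \emph{same} fixed finite family, with an explicit error budget that is summable over the hierarchy --- and your base case (all of \(\mathcal{K}\) finite, hence \(X\) finite-dimensional) does not match the induction you set up, which is on the rank of an individual point of \(\mathcal{K}\), not of \(\mathcal{K}\) itself. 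As written, the proposal is a plan for a proof, not a proof.

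For comparison, the paper avoids this transfinite bookkeeping entirely; it never even uses the recursive definition of \(S_\alpha\). It passes to non-negative functions \(f_n=|h_n|\) and blocks \(g_n\), defines for each \(M\in[\mathbb{N}]\) the hereditary family \(\mathcal{F}_M\) of supports of ``chains'' --- finite sets \(\{m_{i_1}<\dots<m_{i_n}\}\) for which some \(t\in K\) and some \(u_{m_i}\preceq_+ g_{m_i}\) satisfy \(\sum_{i\le i_1}u_{m_i}(t)\ge\rho\) and \((1+\epsilon_{m_{i_k}})\sum_{i\le i_k}u_{m_i}(t)\le\sum_{i\le i_{k+1}}u_{m_i}(t)\). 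If some \(\mathcal{F}_L\) were pointwise compact, the Mazurkiewicz--Sierpi\'nski theorem and the dichotomy theorem of \cite{g} would place \(\mathcal{F}_M\) inside a single \(S_\alpha\) for some \(M\in[L]\); a greedy chain construction at a point \(t\) where \(\|\sum_i u_{m_i}\|\) is attained then produces a chain whose support lies in \(S_\alpha\) yet carries mass at least \(1-\rho\prod(1+\epsilon_{m_i})\), contradicting hypothesis (3). Noncompactness of every \(\mathcal{F}_M\) plus a diagonalization over an enumeration of a putatively countable \(K\) (Lemmas \ref{L6} and \ref{L7}) yields uncountability. The countability of \(K\) is used only in that final diagonalization, and all constants are handled in one finite greedy pass rather than propagated through a transfinite induction. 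If you want to salvage your route, you would essentially be reproving an Alspach/Fonf-type index estimate with uniform constants, which is known to be delicate; the paper's chain-family device is the mechanism that makes the uniformity automatic.
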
 
\section{Preliminaries}
Our notation is standard as may be found in \cite{lt}. We shall consider Banach spaces 
over the real field. By a subspace of a Banach space, we always mean a closed, linear subspace. 
If \(X\) is a Banach space then \(B_X\) stands for its closed unit ball. 
A bounded subset \(B\) of the dual \(X^*\) of \(X\) is {\em norming}, if there is some constant
\(\rho > 0\) such that \(\sup_{x^* \in B} |x^*(x)| \geq \rho \|x\|\), for all \(x \in X\).
In case \(B \subset B_{X^*}\) and \(\rho = 1\), \(B\) is called isometrically norming.
\(\mathrm{ ext }(B_{X^*})\) stands for the set of the extreme points of \(B_{X^*}\).
 
\(X\) is said to contain an {\em isomorph} of the Banach space \(Y\) (or, equivalently, that
\(X\) contains \(Y\) isomorphically), if there exists a bounded linear injection from
\(Y\) into \(X\) having closed range. A subspace \(Y\) of \(X\) is said to be {\em complemented} in \(X\), if it
is the range of a linear, idempotent operator on \(X\).

A sequence \((x_n)\) in a Banach space is said to be {\em semi-normalized} if \(\inf_n \|x_n \| > 0 \) and
\(\sup_n \|x_n \| < \infty\). It is called 
a {\em basic} sequence provided it is a Schauder basis for its closed linear
span in \(X\). 
\((x_n)\) is {\em equivalent} to the usual \(\ell_p\) basis for some \(p \in [1, \infty)\)
(resp. \(c_0\) basis), if there exist positive
constants \(\lambda_1 \leq \lambda_2\) such that
\(\lambda_1 (\sum_{i=1}^n |a_i|^p)^{1/p} \leq \|\sum_{i=1}^n a_i x_i \| \leq \lambda_2 
(\sum_{i=1}^n |a_i|^p)^{1/p} \)
(resp. \(\lambda_1 \max_{i \leq n} |a_i| \leq \|\sum_{i=1}^n a_i x_i \| \leq \lambda_2 \max_{i \leq n } |a_i|\))
for every choice of scalars \((a_i)_{i=1}^n\) and all \(n \in \mathbb{N}\). 

If \((e_n)\) denotes the usual \(\ell_p\) basis for some \(p \in [1, \infty)\), (resp. \(c_0\) basis)
and \(k \in \mathbb{N}\), then \(\ell_p^k\) (resp. \(\ell_\infty^k\)) stands for the subspace of \(\ell_p\)
(resp. \(c_0\)) spanned by \((e_i)_{i=1}^k\). A Banach space \(X\) is said to contain 
{\em uniformly complemented} \(\ell_p^n\)'s, for some \(p \in [1, \infty]\), if there exist a constant
\(C > 0\) and linear maps \(T_n \colon \ell_p^n \to X\), \(S_n \colon X \to \ell_p^n\) satisfying
\(S_n T_n x = x \) for all \(x \in \ell_p^n\) and \(\|T_n \| \leq C \), \(\|S_n \| \leq C\),
for all \(n \in \mathbb{N}\).
 
A Schauder basis \((e_n)\) for a Banach space \(X\) is called 
{\em shrinking}, if the sequence \((e_n^*)\)
of functionals biorthogonal to \((e_n)\) is a Schauder basis for \(X^*\).

A basic sequence \((x_n)\) is called {\em suppression} \(1\)-unconditional, if 
for every choice of scalars \((a_i)_{i=1}^n\), and every \(F \subset \{1, \dots , n\}\), we have that
\(\|\sum_{i \in F} a_i x_i \|\) \(\leq \|\sum_{i=1}^n a_i x_i\|\).
Evidently, such a basic sequence is unconditional, that is, every series of the form
\(\sum_n a_n x_n \) converges unconditionally, whenever it converges.

If \((x_n)\) is a basic sequence in some Banach space \(X\), then a sequence \((u_n)\)
of non-zero vectors in \(X\),
is a {\em block basis} of \((x_n)\) if there 
exist a sequence of non-zero scalars \((a_n)\) and a sequence \((F_n)\) of sucessive finite subsets  
of \(\mathbb{N}\) (i.e, \(\max F_n < \min F_{n+1}\) for all \(n \in \mathbb{N}\)), so that
\(u_n = \sum_{i \in F_n} a_i x_i\), for all \(n  \in \mathbb{N}\). We then call \(F_n\) the
{\em support} of \(u_n\) for all \(n \in \mathbb{N}\). Any member of a block basis of
\((x_n)\) will be called a {\em block} of \((x_n)\).

A separable Banach space \(X\) is a \(\mathcal{L}_\infty\)-space (\cite{la}, \cite{lr}), if there exist a constant
\(C > 0\) and an increasing sequence \((X_n)\) of finite-dimensional subspaces of \(X\) with
\(\cup_n X_n \) dense in \(X\) and such that the Banach-Mazur distance between \(X_n\) and
\(\ell_\infty^{\mathrm{dim} X_n }\) does not exceed \(C\) for all \(n \in \mathbb{N}\). 

If \(M\) is an infinite subset of \(\mathbb{N}\), then we let
\([M]\) denote the set of its infinite subsets, while \([M]^{< \infty}\) stands
for the set of all finite subsets of \(M\).

Given finite subsets \(E\), \(F\) of \(\mathbb{N}\), then the notation
\(E < F\) indicates that \(\max E < \min F\). If \(\mu \), \(\nu\) are finitely supported
signed measures on \(\mathbb{N}\), then we write \(\mu < \nu\) if 
\(\mathrm{ supp } \, \mu < \mathrm{ supp } \, \nu\).

A family \(\mathcal{F}\) of finite subsets of \(\mathbb{N}\) is said to be
{\em hereditary}, if \(G \in \mathcal{F}\) whenever \(G \subset F\)
and \(F \in \mathcal{F}\). It is called {\em spreading}, if
\(\{n_1, < \dots, n_k \} \in \mathcal{F}\) whenever
\(\{m_1, < \dots, m_k \} \in \mathcal{F}\) and \(m_i \leq n_i\) for all \(i \leq k\).
\(\mathcal{F} \) is {\em compact}, if it is compact in the topology of pointwise convergence
in \(2^{\mathbb{N}}\).

The Schreier families \(\{S_\alpha: \, \alpha < \omega_1\}\), were introduced in \cite{aa}.
\[S_1 = \{F \in [\mathbb{N}]^{< \infty}: \, |F| \leq \min F \} \cup \{\emptyset\}.\]
The higher ordinal Schreier families are then defined by transfinite induction.
We do not give the precise definition, since it will not be needed in the sequel.
All we need to know, is that 
they are hereditary, spreading and compact, and that they exhaust
the complexity of the countable and compact metric spaces.
More precisely, it is shown in \cite{aa} that
\(S_\alpha\) is homeomorphic to the ordinal interval
\([1, \omega^{\omega^\alpha}]\), for all \(\alpha < \omega_1\). 
\section{The construction of \(E_p\)} \label{S1}
A lot of examples of Banach spaces with unconditional bases, such as Schreier's \cite{s}
or Tsirelson's \cite{t} spaces, are constructed through the following general method:  
Given \(n \in \mathbb{N}\), let \(e_n^*\) denote the point mass measure at \(n\).
In order to simplify our notation, if 
\(\mu\) is a finitely supported signed measure on \(\mathbb{N}\) and \(n \in \mathbb{N}\), we
write \(\mu(n)\) instead of \(\mu(\{n\})\). Suppose that \(\mathcal{M}\) is a collection of 
finitely supported signed measures on \(\mathbb{N}\) satisfying the following properties:

(a): \(e_n^* \in \mathcal{M}\), for all \(n \in \mathbb{N}\).

(b): \(|\mu(n)| \leq 1\), for all \(\mu \in \mathcal{M}\) and \(n \in \mathbb{N}\).

(c): If \( \mu \in \mathcal{M}\) and \(I \subset \mathbb{N}\), then \(\mu | I \in \mathcal{M}\).

Then, we can define a norm \(\| \cdot \|_{\mathcal{M}}\) on \(c_{00}\) in the following manner:
\[\|x \|_{\mathcal{M}} = \sup \biggl \{ \biggl | \sum_n x(n) \mu(n) \biggr | : \, \mu \in \mathcal{M} \biggr \},
\, \forall \, x \in c_{00}.\]
Let \(X_{\mathcal{M}}\) denote the completion of \((c_{00}, \| \cdot \|_{\mathcal{M}})\).
The conditions imposed on \(\mathcal{M}\) ensure that \((X_{\mathcal{M}}, \| \cdot \|_{\mathcal{M}})\)   
is a Banach space and that the natural basis \((e_n)\) of \(c_{00}\) is a normalized, suppression \(1\)-unconditional
Schauder basis for \(X_{\mathcal{M}}\). Moreover, the elements of \(\mathcal{M}\) act naturally
as bounded linear functionals
on \(X_{\mathcal{M}}\) and in fact, \(B_{X_{\mathcal{M}}^*}\) is the \(w^*\)-closed convex hull in  
\(X_{\mathcal{M}}^*\) of \(\mathcal{M} \cup - \mathcal{M}\).
In particular, \(\mathcal{M}\) isometrically norms \(X_{\mathcal{M}}\) and \((e_n^*)\) becomes the
sequence of functionals biorthogonal to \((e_n)\). We also have that \((e_n^*)\) is normalized.

For our purposes, given \(p \in [1, \infty)\), we shall choose a particular set \(\mathcal{M}_p\) of
non-negative measures on \(\mathbb{N}\), subject to conditions (a)-(c), and let
\(E_p = (X_{\mathcal{M}_p}, \| \cdot \|_{\mathcal{M}_p})\). To do so, we first fix a sequence
\((F_n)\) of successive finite subsets of \(\mathbb{N}\). This choice of \((F_n)\) does
affect the definition of \(E_p\). However, the isomorphic properties of \(E_p\) we are    
interested in, are independent of this choice as long as \((F_n)\) satisfies \(\lim_n |F_n| = \infty\).
More spesifically, \(E_p\) will be isomorphically polyhedral independently of the choice of \((F_n)\).
\(E_p\) will not be isomorphic to a subspace of a \(C(K)\) space for any countable and compact metric space \(K\), 
if \(\lim_n |F_n| = \infty\). The same growth condition on \((F_n)\) will give us that \(\ell_p\) is isomorphic to a quotient
of \(E_p\) when \(p > 1\).
\begin{Def}
\begin{enumerate}
\item A finitely supported measure \(\mu\) on \(\mathbb{N}\) is a \(p\)-measure, if there exists a sequence
\((G_n)\) of finite subsets of \(\mathbb{N}\)
with \(G_n\) an initial segment of \(F_n\) (we allow \(G_n = \emptyset\)), for all \(n \in \mathbb{N}\)
so that
\[\mu = \sum_n \frac{|G_n|^p}{|F_n|^p} \, e_{\max G_n}^*, \, \text{ and } \, \sum_n \frac{|G_n|^p}{|F_n|^p} \leq 1\]
(in case \(G_n = \emptyset\) for some \(n \in \mathbb{N}\), we interpret the corresponding summand as the zero measure).
\item Let \(\mu_1 < \dots < \mu_k \) be a finite sequence of non-zero \(p\)-measures. For each \(i \leq k\), let
\(m_i\) denote the smallest \(n \in \mathbb{N}\) satisfying \(F_n \cap \mathrm{ supp } \, \mu_i \, \ne \emptyset\).
The sequence \((\mu_i)_{i=1}^k \) is called admissible, if \(\{ \min F_{m_i} : \, i \leq k \} \in S_1\).
\end{enumerate} 
\end{Def}
We now define
\begin{align}
\mathcal{M}_p &= \biggl \{ \sum_{i=1}^k \mu_i : \, k \in \mathbb{N}, \, \mu_1 < \dots < \mu_k  
\text{ is an admissible sequence of} \notag \\
&p-\text{measures } \biggr \} \cup \{e_n^* : \, n \in \mathbb{N}\}.\notag
\end{align}
It is clear from the definition that \(\mathcal{M}_p\) satisfies conditions (a) and (b).
The spreading property of \(S_1\) gives us that (c) is also satisfied. Hence, \((e_n)\) is a
suppression \(1\)-unconditional basis for \(E_p\).

We are going to show that \(E_p\) is \(c_0\)-saturated. This will be accomplished through 
our next proposition, which provides some natural conditions imposed on a 
bounded norming subset of the dual of a Banach space with a basis, that 
ensure that the space is \(c_0\)-saturated. The proof of this proposition relies on Elton's theorem
\cite{e} and a compactness argument used in the proof of the reflexivity of mixed
Tsirelson spaces \cite{ad}.
\begin{Prop} \label{P1}
Let \(X\) be a Banach space with a normalized Schauder basis \((e_n)\). Let
\((e_n^*)\) denote the sequence of functionals biorthogonal to \((e_n)\).
Assume there is a bounded norming subset \(B\) of \(X^*\) with the following property:
There exists a compact family \(\mathcal{F}\) of finite subsets
of \(\mathbb{N}\) such that for every \(b^* \in B\) there exist \(F \in \mathcal{F}\)
and a finite sequence \((b_k^*)_{k \in F}\) of finitely supported absolutely 
sub-convex combinations of \((e_n^*)\) so that
\(b^* = \sum_{k \in F} b_k^*\) and
\(\min \mathrm{ supp } \, b_k^* \geq k \) for all \(k \in F\).
Then, \(\sum_n |b^*(e_n)| < \infty\), for all \(b^* \in \overline{B}^{w^*}\) and 
\(X\) is \(c_0\)-saturated.
\end{Prop}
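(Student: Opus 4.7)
The plan is to establish both claims in order, leveraging the structural decomposition of $B$ together with the compactness of $\mathcal{F}$.

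For the absolute summability, I first handle $b^* \in B$. Writing $b^* = \sum_{k \in F} b_k^*$ as in the hypothesis, each $b_k^*$ is a finitely supported absolutely sub-convex combination of $(e_n^*)$, so $\sum_n |b_k^*(e_n)| \leq 1$. Combined with $\min \mathrm{supp}\, b_k^* \geq k$, this yields the crude telescope $\sum_n |b^*(e_n)| \leq \sum_{k \in F} \sum_n |b_k^*(e_n)| \leq |F|$. To pass to $\overline{B}^{w^*}$, I take a net $(b^*_\alpha) \subset B$ with $b^*_\alpha \to b^*$ in $w^*$, together with decompositions $b^*_\alpha = \sum_{k \in F_\alpha} b^*_{\alpha, k}$, $F_\alpha \in \mathcal{F}$. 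Compactness of $\mathcal{F}$ in $2^{\mathbb{N}}$ yields a subnet with $F_\alpha \to F_\infty \in \mathcal{F}$, and (the $b^*_{\alpha, k}$'s being uniformly norm-bounded in $X^*$) a further subnet extraction gives $b^*_{\alpha, k} \to b^*_{\infty, k}$ in $w^*$ for every $k \in F_\infty$. Indices $k \in F_\alpha \setminus F_\infty$ drift to infinity, hence contribute $0$ to $b^*(e_n)$ for each fixed $n$, leaving $b^*(e_n) = \sum_{k \in F_\infty} b^*_{\infty, k}(e_n)$. Since the $\ell_1$-norm relative to $(e_n^*)$ is $w^*$-lower-semicontinuous on $X^*$, the same crude estimate persists in the limit, giving $\sum_n |b^*(e_n)| \leq |F_\infty| < \infty$.

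For the $c_0$-saturation, let $Y$ be an infinite-dimensional subspace of $X$. Extracting a normalized basic sequence in $Y$ and applying a Bessaga--Pelczynski perturbation, it suffices to find a $c_0$-equivalent subsequence of some normalized block basis $(x_n)$ of $(e_n)$. The first part forces $b^*(x_n) \to 0$ for every $b^* \in \overline{B}^{w^*}$, since the supports of the $x_n$'s are disjoint and $\|x_n\|_\infty$ is controlled by the basis constant. Elton's theorem (\cite{e}) then provides a further subsequence with near-unconditional structure; onto this I graft the Tsirelson-style compactness argument from \cite{ad}. A rapid-growth extraction, calibrated against the Cantor--Bendixson rank of $\mathcal{F}$ and the Elton constants, produces a subsequence $(y_k)$ such that for every decomposition $b^* = \sum_{k \in F} b_k^*$ with $F \in \mathcal{F}$, the bilinear interaction $(b_k^*(y_j))_{k, j}$ is essentially diagonal and uniformly controlled. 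Near-unconditionality then upgrades this to $\|\sum_k a_k y_k\| \leq C \max_k |a_k|$, so $(y_k)$ is equivalent to the $c_0$-basis.

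The main obstacle is the stabilization in the second part: the diagonal extraction must respect both the transfinite Cantor--Bendixson rank of $\mathcal{F}$ and the Elton constants simultaneously, and the delicate point is that the mere compactness of $\mathcal{F}$ (rather than any a priori bound on $|F|$) is precisely what enables the uniform $c_0$-estimate. This mirrors the transfinite bookkeeping in the reflexivity proof of mixed Tsirelson spaces.
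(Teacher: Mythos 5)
Your first part (summability over $\overline{B}^{w^*}$) is essentially the paper's argument: pass to a convergent (sub)net $F_\alpha\to F_\infty$ in the compact family, observe that the pieces indexed by $F_\alpha\setminus F_\infty$ have supports drifting to infinity and hence vanish in the $w^*$-limit, extract limits $b^*_{\infty,k}$ of the finitely many remaining pieces, and use $w^*$-lower semicontinuity of $x^*\mapsto\sum_n|x^*(e_n)|$ to get $\sum_n|b^*(e_n)|\le|F_\infty|$. This is correct and matches the paper (which works with sequences, as it may since $X$ is separable).

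The second part has a genuine gap. The paper's proof of $c_0$-saturation is a one-line application of Elton's theorem on extremely weakly unconditionally convergent series, which is exactly what reference \cite{e} is: once every $x^*$ in the $w^*$-compact norming set $K=\overline{B}^{w^*}$ satisfies $\sum_n|x^*(e_n)|<\infty$, one renorms so that (by Milman's theorem) $\mathrm{ext}(B_{X^*})\subset K\cup(-K)$, checks that every semi-normalized block basis $(u_j)$ of $(e_n)$ inherits $\sum_j|x^*(u_j)|<\infty$ for all such $x^*$ (the coefficients $e_n^*(u_j)$ being uniformly bounded), and Elton's theorem then yields a subsequence of $(u_j)$ equivalent to the $c_0$ basis; a standard perturbation argument finishes. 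Instead, you invoke ``Elton's theorem'' for near-unconditionality (a different result of Elton) and then propose to rebuild the $c_0$ upper estimate by a ``rapid-growth extraction calibrated against the Cantor--Bendixson rank of $\mathcal{F}$'' making the interactions ``essentially diagonal.'' This is not carried out, and as described it cannot work: compactness of $\mathcal{F}$ does \emph{not} bound $\sup_{F\in\mathcal{F}}|F|$ (take $\mathcal{F}=S_1$, the relevant case here), so the bound $\sum_n|b^*(e_n)|\le|F|$ is not uniform over $B$, and even a perfectly diagonal interaction $(b_k^*(y_{j(k)}))_{k\in F}$ still sums over $|F|$-many terms, each of size up to a constant. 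Obtaining $\|\sum_k a_ky_k\|\le C\max_k|a_k|$ from non-uniformly summable functionals is precisely the nontrivial content of Elton's e.w.u.c.\ theorem, which your sketch implicitly tries to reprove without supplying the argument. Your own closing remark, that the absence of a bound on $|F|$ is ``precisely what enables the uniform $c_0$-estimate,'' has it backwards and signals that this difficulty was not resolved. The fix is simply to recognize that part one puts you exactly in the hypotheses of Elton's theorem and to cite it.
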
 
\begin{proof}
Let \(K\) denote the \(w^*\)-closure of \(B\) in \(X^*\). \(K\) is a \(w^*\)-compact subset
of \(X^*\) which norms \(X\) and thus \(X\) embeds isomorphically in \(C(K)\).
In view of Elton's theorem \cite{e}, it will suffice showing that 
\((x^*(e_n))\) is an absolutely summable sequence for every 
\(x^* \in K \). To this end, suppose that \(B \subset d B_{X^*}\), for some \(d > 0\),
and let \((b_n^*)\) be a sequence in \(B\) 
\(w^*\)-converging to some \(x^* \in K \). Our assumptions allow us find, for
every \(n \in \mathbb{N}\), some \(F_n \in \mathcal{F}\) and a 
finite sequence \((b_{nk}^*)_{k \in F_n}\) of finitely supported absolutely sub-convex
combinations of \((e_n^*)\) so that
\[b_n^* = \sum_{k \in F_n } b_{nk}^*, \text{ and }
\min \mathrm{ supp } \, b_{nk}^* \geq k, \, \forall \, k \in F_n.\]
We may assume, without loss of generality by passing to subsequences if necessary thanks 
to the compactness of \(\mathcal{F}\), that \((F_n)\) converges pointwise to some \(F \in \mathcal{F}\).
Moreover, we may assume that for every \(n \in \mathbb{N}\), \(F_n = F \cup G_n\) with \(G_n\) disjoint
from \(F\) and \(\lim_n \min G_n = \infty\) (this is the case when \((F_n)\) has no constant 
subsequence. Otherwise, the rest of the argument becomes simpler and it concludes as in the last paragraph
of the present proof). It follows from this that
\(w^*- \lim_n \sum_{k \in G_n} b_{nk}^* = 0\).

Indeed, write \(u_n^* = \sum_{k \in G_n } b_{nk}^*\) and 
\(b_n^* = \sum_{k \in F} b_{nk}^* + u_n^*\), for all \(n \in \mathbb{N}\).
If \( M = \sup_n \|e_n^*\|\), then \(\|u_n^*\| \leq M |F| + d\), for all \(n \in \mathbb{N}\),
as \(b_n^* \in B \subset d B_{X^*}\) and \(\|b_{nk}^*\| \leq M\) for all \( k \in F_n\) and
\(n \in \mathbb{N}\). Hence, \((u_n^*)\) is a uniformly bounded in norm sequence of finite linear 
combinations of \((e_n^*)\) with \(\lim_n \min \mathrm{ supp } \, u_n^* = \lim_n \min G_n = \infty\) (assuming,
as we clearly may, that \(u_n^* \ne 0\) for all \(n \in \mathbb{N}\)) and so
\(w^*- \lim_n u_n^* = 0\), as required.  

We conclude that \(x^* = w^*- \lim_n \sum_{k \in F} b_{nk}^*\).
Once again, after passing to subsequences and diagonalizing, we can assume that
\(w^*- \lim_n b_{nk} = b_k^* \in X^*\), for all \(k \in F\) and thus,
\(x^* = \sum_{k \in F} b_k^*\). Fix some \(l \in F \).
Since \((b_{nl}^*)_n \) is a sequence of absolutely sub-convex combinations of \((e_n^*)\),
we must have that \(\sum_n |b_l^*(e_n)| \leq 1\). Therefore,
\[\sum_n |x^*(e_n)| \leq \sum_{k \in F} \sum_n |b_k^*(e_n)| \leq |F|\]
and so \((x^*(e_n))\) is absolutely summable. This completes the proof.
\end{proof}
\begin{remark}
In the applications of Proposition \ref{P1}, the sequence \((b_k^*)_{k \in F}\) 
will consist of disjointly supported absolutely sub-convex combinations of \((e_n^*)\)
with \(\{\min \mathrm{ supp } \, b_k^* : \, k \in F \} \in \mathcal{F}\).
\end{remark}
We shall actually show that \(E_p\) is isomorphically polyhedral. This is a consequence of
the next proposition, which is most likely known although we have not been able to find an
exact reference.
\begin{Prop} \label{P3}
Let \(X\) be a Banach space with a normalized Schauder basis \((e_n)\).
Assume that there exists a \(w^*\)-compact, norming subset \(B\) of \(X^*\) 
such that
\[\sum_n |b^* (e_n) | < \infty, \, \forall \, b^* \in B.\]
Then, \(X\) is isomorphic to a Lindenstrauss-Phelps space and therefore, it
is isomorphically polyhedral.
\end{Prop}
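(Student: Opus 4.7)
The strategy is to produce an equivalent norm on $X$ whose dual unit ball has only countably many extreme points; Fonf's theorem \cite{f2} then upgrades ``isomorphic to a Lindenstrauss-Phelps space'' to ``isomorphically polyhedral''. My candidate is $K := \overline{\mathrm{aconv}}^{w^*}(B) \subseteq X^*$. Since $B$ is $w^*$-compact, bounded, and norming, $K$ is $w^*$-compact, absolutely convex, bounded, and norming, hence serves as the dual unit ball of an equivalent norm on $X$. Milman's converse to the Krein-Milman theorem gives $\mathrm{ext}(K) \subseteq \overline{B \cup -B}^{w^*} = B \cup -B$, so it suffices to show $\mathrm{ext}(K) \cap B$ is countable.

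I would first put the problem into a workable unconditional form. The hypothesis $\sum_n |b^*(e_n)| < \infty$ combined with the boundedness of $(\|e_n^*\|)$ (basis constant) implies each $b^* \in B$ admits the norm-convergent expansion $b^* = \sum_n b^*(e_n)\, e_n^*$ in $X^*$. This suggests enlarging $B$ to the restriction-closed set
\[\widetilde B := \Bigl\{\sum_{n \in I} b^*(e_n)\, e_n^* : b^* \in B,\; I \subseteq \mathbb{N}\Bigr\}.\]
Approximating the restriction map $B \times 2^{\mathbb{N}} \to X^*$ uniformly by its finite partial sums (the tail is controlled by $\sum_{n > N} |b^*(e_n)|\|e_n^*\|$, which vanishes in norm), I would show that this map is $w^*$-continuous, so $\widetilde B$ is $w^*$-compact; it is evidently norming and closed under coordinate restrictions. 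Passing to $\widetilde K := \overline{\mathrm{aconv}}^{w^*}(\widetilde B)$, the restriction-closure lifts to $\widetilde K$, making the basis $(e_n)$ $1$-unconditional in the new norm, and Milman again gives $\mathrm{ext}(\widetilde K) \subseteq \widetilde B \cup -\widetilde B$.

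The heart of the argument is then to show that each $c^* \in \widetilde B$ with $|\mathrm{supp}\, c^*| \geq 2$ fails to be extreme. Splitting $c^* = c_1^* + c_2^*$ into nonzero disjointly supported restrictions keeps both pieces in $\widetilde B \subseteq \widetilde K$; if one can force the triangle equality $\|c^*\|_{\widetilde K} = \|c_1^*\|_{\widetilde K} + \|c_2^*\|_{\widetilde K}$, then
\[c^* = \|c_1^*\|_{\widetilde K} \bigl(c_1^*/\|c_1^*\|_{\widetilde K}\bigr) + \|c_2^*\|_{\widetilde K} \bigl(c_2^*/\|c_2^*\|_{\widetilde K}\bigr)\]
is a nontrivial convex decomposition in $\widetilde K$, contradicting extremality of $c^*$. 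Extreme points of $\widetilde K$ are then restricted to $\bigcup_n \mathbb{R} e_n^*$, which contributes at most two extreme points per basis index (the endpoints of the one-dimensional slice $\widetilde K \cap \mathbb{R} e_n^*$), yielding a countable extreme-point set; hence $X$ under the new norm is Lindenstrauss-Phelps and Fonf \cite{f2} delivers the isomorphic polyhedrality.

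The main obstacle I anticipate is precisely this triangle-equality step: for general $1$-unconditional dual norms (think the $\ell_p$ duals for $1 < p < \infty$) the triangle inequality on disjoint supports is strict and genuine multi-coordinate extreme points do occur. The argument must therefore genuinely exploit the $\ell_1$-type expansion of elements of $\widetilde B$ forced by $\sum_n |b^*(e_n)| < \infty$ to put the new dual norm in the $\ell_1$ regime on coordinates; equivalently, it must exhibit $(X, \|\cdot\|_{\mathrm{new}})$ as (isomorphic to) a subspace of a $c_0$-type space, which is what ultimately makes the Lindenstrauss-Phelps structure manifest.
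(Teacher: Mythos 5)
Your overall strategy (renorm $X$ so that the dual ball has a controlled set of extreme points, then invoke Fonf) is the right one, but the specific renorming you chose does not work, and the step you yourself flag as the main obstacle is not merely unproven --- it is false. Consider $X = c_0$ with its unit vector basis and take
\[ B = \{0\} \cup \{e_n^* : n \in \mathbb{N}\} \cup \{a e_1^* + b e_2^* : a^2 + b^2 \leq 1\}. \]
This $B$ is $w^*$-compact, bounded and norming, and every $b^* \in B$ is finitely supported, so the summability hypothesis holds trivially. Here your $\widetilde B$ adds nothing new in coordinates $1,2$, and $\widetilde K = \overline{\mathrm{aconv}}^{w^*}(\widetilde B)$ is contained in the $w^*$-closed absolutely convex set $W = \{c^* : (c^*(e_1)^2 + c^*(e_2)^2)^{1/2} + \sum_{n \geq 3} |c^*(e_n)| \leq 1\}$. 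If $v = a e_1^* + b e_2^*$ with $a^2 + b^2 = 1$ and $v = (u+w)/2$ with $u, w \in \widetilde K$, then projecting onto coordinates $1,2$ (a $w^*$-continuous linear map sending $\widetilde K$ onto the Euclidean disc, of which $v$ is an extreme point) forces $u$ and $w$ to agree with $v$ on those coordinates, and membership in $W$ then annihilates all coordinates $n \geq 3$; hence $u = w = v$. So $\widetilde K$ has uncountably many extreme points, each with two-point support: both your claimed key step (that $|\mathrm{supp}\, c^*| \geq 2$ rules out extremality) and the countability of $\mathrm{ext}(\widetilde K)$ fail for this renorming. The triangle equality on disjointly supported pieces is simply not available, and no refinement of the non-extremality argument can rescue $\overline{\mathrm{aconv}}^{w^*}(\widetilde B)$ as the new dual ball.

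The paper avoids this by taking a much smaller norming set, namely $\mathcal{N} = \{\sigma_1 b^*|[1,n] + \sigma_2 e_k^* : b^* \in B,\ \sigma_i \in \{-1,0,1\},\ n < k\}$: finite initial truncations of elements of $B$ plus at most one far-out coordinate functional. Milman's theorem places $\mathrm{ext}(B_{Y^*})$ inside $\overline{\mathcal{N}}^{w^*}$, and the summability hypothesis is used exactly once, to show that an infinitely supported $\nu = \sigma b^* \in \overline{\mathcal{N}}^{w^*}$ is never extreme: choosing $n_0$ with $\sum_{n > n_0}|b^*(e_n)| < 1$, one writes $\nu$ explicitly as the convex combination $\sum_{n>n_0}|b^*(e_n)|\,(\sigma b^*|[1,n_0] + \sigma_n e_n^*) + (1 - \sum_{n>n_0}|b^*(e_n)|)\,\sigma b^*|[1,n_0]$ of distinct members of $\mathcal{N}$. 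Note also that the paper does not aim at literally countably many extreme points: it concludes $\mathrm{ext}(B_{Y^*}) \subseteq \mathcal{N}$, observes that $\mathcal{N}$ is a countable union of norm-compact sets, and invokes Elton's result to get that $Y$ is isomorphic to a Lindenstrauss--Phelps space. Both ingredients --- the truncation-plus-one-coordinate form of the norming set, and the relaxed target of $\sigma$-norm-compactness rather than countability of the extreme points --- are what your proposal is missing.
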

\begin{proof}
We first observe that \(X\) naturally embeds into \(C(B)\), as \(B\) is norming.
Since \((b^*(e_n))\) is absolutely summable for every \(b^* \in B\) and \((e_n)\) is
normalized, we obtain that \(\lim_n b^*(u_n) = 0\) for every normalized block basis
\((u_n)\) of \((e_n)\), and every \(b^* \in B\). It follows now that \((u_n)\) is weakly null
and thus, \((e_n)\) is shrinking. We next define
\[\mathcal{N} = \{ \sigma_1 b^* | [1,n] + \sigma_2 e_k^* : \, b^* \in B,  
\, \sigma_i \in \{-1, 0,1\} \, (i=1,2), \, n < k \text{ in } \mathbb{N}\}.\]
This is a bounded subset of \(X^*\). 
We can now define a norm \(\| \cdot \|_{\mathcal{N}}\) on \(X\) by 
\[\|x \|_{\mathcal{N}} = \sup_{\nu \in \mathcal{N}} |\nu(x)|,
\, \forall \, x \in X.\]
Let \(Y = (X, \|\cdot\|_{\mathcal{N}})\).
Evidently, as \(B\) is norming, \(\|\cdot \|_{\mathcal{N}}\) is an equivalent norm
on \(X\).

We claim that \(\mathrm{ext}(B_{Y^*}) \subset \mathcal{N}\).
Let us assume momentarily that this claim is proved. Then, clearly, \(\mathcal{N}\) can be covered by
a countable union of norm-compact subsets of \(Y^*\), and so the same holds true for 
\(\mathrm{ext}(B_{Y^*}) \). A result of Elton \cite{e} now yields that \(Y\) is isomorphic to
a Lindenstrauss-Phelps space and hence, \(X\) is isomorphically polyhedral by the results of \cite{f2}.

To prove the claim, we employ Milman's theorem, as \(\mathcal{N}\)
isometrically norms \(Y\), and obtain that
\(\mathrm{ext}(B_{Y^*}) \subset \overline{\mathcal{N}}^{w^*}\).
We are going to show that if \(\nu \in \overline{\mathcal{N}}^{w^*}\) is infinitely 
supported with respect to \((e_n^*)\), then \(\nu\) is not an extreme point of
\(B_{Y^*}\). This clearly implies the validity of the claim.
Now, since \(\nu\) is infinitely supported, we may write \(\nu = \sigma b^*\)
for some infinitely supported \(b^* \in B\) and \(\sigma \in \{-1,1\}\).
Since \((b^*(e_n))\) is absolutely summable, we can find \(n_0 \in \mathbb{N}\)
such that \(\sum_{n > n_0 } |b^*(e_n) | < 1\). We next write
\[\nu = \sigma b^* |[1, n_0] + \sum_{n > n_0} \sigma b^*(e_n) e_n^* \]
and hence, for a suitable choice of signs \((\sigma_n)_{n > n_0}\), we have that
\[\nu = \sum_{n > n_0} |b^*(e_n)| (\sigma b^* |[1, n_0] + \sigma_n e_n^*)
+ (1 - \sum_{n > n_0} |b^*(e_n)|) \sigma b^* | [1,n_0].\]
Since \(b^*\) is infinitely supported, we have managed to express \(\nu\) as a convex combination of distinct elements 
of \(\mathcal{N} \subset B_{Y^*}\) and so it can not be an extreme point of \(B_{Y^*}\). 
\end{proof}
\begin{remark}
A special case of a result due to G. Androulakis \cite{an} implies that if the \(w^*\)-compact norming set \(B\)
in the statement of Proposition \ref{P3} is replaced by a boundary set for \(X\), i.e., a subset of
the unit sphere of \(X^*\) on which every non-zero vector of \(X\) attains its norm, then some
subsequence of \((e_n)\) spanns an isomorphically polyhedral subspace of \(X\).
\end{remark}  
\begin{Cor} \label{C1}
\(E_p\) is isomorphically polyhedral and \((e_n)\) is an unconditional, shrinking normalized basis for \(E_p\).
\end{Cor}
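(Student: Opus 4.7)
The plan is to obtain the corollary as a direct application of Propositions \ref{P1} and \ref{P3}, with the distinguished norming set being the generating collection $\mathcal{M}_p$ itself and the compact Schreier family $S_1$ playing the role of $\mathcal{F}$. Normalization and suppression $1$-unconditionality of $(e_n)$ have already been recorded in Section \ref{S1}, so the only remaining assertions are that $(e_n)$ is shrinking and that $E_p$ is isomorphically polyhedral; both of these drop out of Proposition \ref{P3} once its hypotheses are verified.

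The first task is to feed $\mathcal{M}_p$ into Proposition \ref{P1}. By condition (b), $\mathcal{M}_p \subset B_{E_p^*}$, and by construction $\mathcal{M}_p$ isometrically norms $E_p$. Given $b^* \in \mathcal{M}_p$, I would handle two cases. If $b^* = e_n^*$, take $F = \{n\} \in S_1$ and $b_n^* = e_n^*$. Otherwise $b^* = \sum_{i=1}^k \mu_i$ for some admissible sequence of $p$-measures; each $\mu_i = \sum_n \frac{|G_n|^p}{|F_n|^p}\, e_{\max G_n}^*$ is finitely supported with $\sum_n \frac{|G_n|^p}{|F_n|^p} \leq 1$, hence is an absolutely sub-convex combination of $(e_n^*)$. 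Setting $r_i = \min F_{m_i}$, where $m_i$ is as in the definition of admissibility, I let $F = \{r_1, \dots, r_k\}$ and $b_{r_i}^* = \mu_i$. Admissibility is exactly the statement that $F \in S_1$, and the inclusion $\mathrm{supp}(\mu_i) \subset \bigcup_{n \geq m_i} F_n$ gives $\min \mathrm{supp}(\mu_i) \geq \min F_{m_i} = r_i$. Since $S_1$ is compact, Proposition \ref{P1} applies and yields $\sum_n |b^*(e_n)| < \infty$ for every $b^* \in \overline{\mathcal{M}_p}^{w^*}$.

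It remains to invoke Proposition \ref{P3} with $B = \overline{\mathcal{M}_p}^{w^*}$: this set is $w^*$-compact (as a $w^*$-closed subset of $B_{E_p^*}$), norming (since $\mathcal{M}_p$ already is), and satisfies the absolute summability condition just established. The conclusion of the proposition provides an equivalent polyhedral renorming of $E_p$, and the opening paragraph of its proof simultaneously gives that $(e_n)$ is shrinking. I do not anticipate any serious obstacle: the entire content of the argument lies in the combinatorial observation that the admissibility requirement built into $\mathcal{M}_p$ is precisely the $S_1$-type decomposition demanded by Proposition \ref{P1}.
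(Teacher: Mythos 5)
Your proposal follows essentially the same route as the paper: it verifies the hypotheses of Proposition \ref{P1} for \(\mathcal{M}_p\) with \(\mathcal{F}=S_1\), using that each \(p\)-measure is a finitely supported absolutely sub-convex combination of \((e_n^*)\) and that admissibility supplies the \(S_1\)-indexed decomposition, and then feeds the resulting summability of \((b^*(e_n))\) into Proposition \ref{P3} to get both shrinkingness and isomorphic polyhedrality. The only cosmetic difference is that you index the blocks by \(\min F_{m_i}\) where the paper uses \(\min \mathrm{supp}\, \mu_i\); both choices serve the same purpose.
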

\begin{proof}
It follows directly from the definition of \(\mathcal{M}_p\), that \((e_n)\) is a normalized,
suppression \(1\)-unconditional basis for \(E_p\).
Let \( \mu \in \mathcal{M}_p\). We first verify that the conditions given in Proposition \ref{P1}
are fulfilled by \(\mu\) with \(\mathcal{F} = S_1\). Indeed, this is obvious when \(\mu = e_n^*\)
for some \(n \in \mathbb{N}\). Otherwise, \(\mu = \sum_{i=1}^k \mu_i \) for some \(k \in \mathbb{N}\) 
and \(p\)-measures \(\mu_1 < \dots < \mu_k\) with \(\{\min \mathrm{ supp } \, \mu_i : \, i \leq k \}
\in S_1\). Since every \(p\)-measure is a finitely supported sub-convex combination of \((e_n^*)\),
we deduce from Proposition \ref{P1} that \((b^*(e_n))\) is absolutely summable for every
\(b^* \in \overline{\mathcal{M}_p}^{w^*}\), and that \(E_p\) is \(c_0\)-saturated.
We have already observed in the proof of Proposition \ref{P3}
that the summability property of \(\overline{\mathcal{M}_p}^{w^*}\), 
implies that \((e_n)\) is shrinking. This fact of course could be deduced from
James's characterization of shrinking unconditional bases, as \(E_p\) is \(c_0\)-saturated and hence 
it can not contain an isomorph of \(\ell_1\).
Finally, an appeal to Proposition \ref{P3} gives us that
\(E_p\) is isomorphically polyhedral. 
\end{proof}  
\begin{remark}
J. Lindenstrauss and R. Phelps \cite{lp} asked
whether the dual of a Lindenstrauss-Phelps space has the Schur property.
This question was answered in the negative by E. Tokarev \cite{to}.
The spaces \(E_p\) for \(p > 1\), also provide counterexamples to the preceding question, since it will be shown
in the next section that they admit quotients isomorphic to \(\ell_p\). 
A simple example of a Lindenstrauss-Phelps space whose
dual fails the Schur property, is Schreier's space \(\mathcal{S}\). Indeed, \(\mathcal{S}\), as presented
in \cite{o}, is
isomorphic to a subspace of \(C(\omega^\omega)\) and hence it is isomorphic to a Lindenstrauss-Phelps space.
Letting \((e_n)\) denote the natural normalized Schauder basis of \(\mathcal{S}\), then \((e_n)\)
is shrinking and \((e_n^*)\) is a normalized weakly null sequence in \(\mathcal{S}^*\). It follows now that
\(\mathcal{S}^*\) fails the Schur property. It can be shown that \(\mathcal{S}^*\) is \(\ell_1\)-saturated.
\end{remark}
The next lemma describes a simple method for selecting subsequences of \((e_n)\), equivalent to
the \(c_0\)-basis.
\begin{Lem} \label{L1}
\begin{enumerate}
\item Let \((G_n)\) be a sequence of finite subsets of \(\mathbb{N}\) with \(G_n\) an initial
segment of \(F_n\) for all \(n \in \mathbb{N}\), such that \(\sum_n (|G_n|/|F_n|)^p \leq 1\).
Then, \(\sum_n \mu(G_n) \leq 1\), for all \(\mu \in \mathcal{M}_p\).
\item Let \((G_n)_{n \in I}\) be a finite sequence of finite subsets of \(\mathbb{N}\) with \(G_n\) an initial
segment of \(F_n\) for all \(n \in I \subset \mathbb{N}\), such that \(\sum_n (|G_n|/|F_n|)^p \leq 1\).
Then, \(\| \sum_{n \in I} \sum_{k \in G_n} e_k \| \leq 1\).
\end{enumerate}
\end{Lem}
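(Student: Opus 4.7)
The plan is to prove (1) first; part (2) will then follow by duality. Indeed, the vector $v = \sum_{n \in I}\sum_{k \in G_n} e_k$ is non-negative on the natural basis, and every element of $\mathcal{M}_p$ is a non-negative measure, so
\[\biggl\| \sum_{n \in I} \sum_{k \in G_n} e_k \biggr\| = \sup_{\mu \in \mathcal{M}_p} \mu(v) = \sup_{\mu \in \mathcal{M}_p} \sum_{n \in I} \mu(G_n) \leq 1\]
by applying (1) to the sequence $(G_n)$ extended by $G_n = \emptyset$ for $n \notin I$.

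For (1), the case $\mu = e_n^*$ is immediate since the $F_m$'s are pairwise disjoint, so $n$ lies in at most one $G_m$. It remains to handle $\mu = \sum_{i=1}^k \mu_i$ where $\mu_1 < \cdots < \mu_k$ is an admissible sequence of $p$-measures. Write each $\mu_i = \sum_n (|H_n^{(i)}|/|F_n|)^p \, e_{\max H_n^{(i)}}^*$ with $H_n^{(i)}$ an initial segment of $F_n$ and $\sum_n (|H_n^{(i)}|/|F_n|)^p \leq 1$. The central observation is that since both $G_n$ and $H_n^{(i)}$ are initial segments of the same $F_n$, the atom $\max H_n^{(i)}$ belongs to $G_n$ if and only if $H_n^{(i)} \subseteq G_n$, and in that case
\[\mu_i(G_n) = (|H_n^{(i)}|/|F_n|)^p \leq (|G_n|/|F_n|)^p,\]
while otherwise $\mu_i(G_n) = 0$.

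The argument is then completed by invoking admissibility to show that for each fixed $n$ at most one index $i$ contributes a non-zero summand to $\sum_{i=1}^k \mu_i(G_n)$: the successiveness $\mu_1 < \cdots < \mu_k$ combined with $\{\min F_{m_i} : i \leq k\} \in S_1$ forces the sets of $F$-blocks on which distinct $\mu_i$'s carry non-zero atoms to be pairwise disjoint. Granting this,
\[\sum_n \mu(G_n) = \sum_n \sum_{i=1}^k \mu_i(G_n) \leq \sum_n (|G_n|/|F_n|)^p \leq 1,\]
which is the desired estimate.

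The hardest part will be this block-disjointness claim. Its verification requires a careful inspection of the interaction between the orderings: whenever $H_n^{(i)}$ and $H_n^{(i+1)}$ are both non-empty in a common $F_n$, they are nested initial segments with $|H_n^{(i)}| < |H_n^{(i+1)}|$ forced by $\max \operatorname{supp}(\mu_i) < \min \operatorname{supp}(\mu_{i+1})$, and one must exploit the Schreier-family condition on the starting blocks $m_i$ to rule out this configuration. Once the block-disjointness is in place, the remaining pointwise estimate is routine bookkeeping.
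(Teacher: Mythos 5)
Your overall route is the same as the paper's: part (2) is reduced to part (1) via the positivity of the measures in $\mathcal{M}_p$ and of the vector, and part (1) rests on the dichotomy that for two initial segments $H$, $G$ of the same $F_n$, either $H \subseteq G$ (so the atom at $\max H$ contributes $(|H|/|F_n|)^p \leq (|G|/|F_n|)^p$ to $\mu(G)$) or the atom lies outside $G$ and contributes nothing. All of that is correct and is exactly what the paper does.

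The difference lies in how the two arguments handle the fact that each block $F_n$ carries at most one atom of $\mu$. The paper simply asserts that every element of $\mathcal{M}_p$ other than the $e_n^*$'s admits a representation $\sum_{n=1}^{d}(|H_n|/|F_n|)^p e^*_{\max H_n}$ with one $H_n$ per $F_n$, i.e.\ it builds the block-disjointness into the representation without comment. You correctly isolate this as a claim needing proof, but the route you propose for it cannot be completed: admissibility, as literally defined, constrains only the set $\{\min F_{m_i}\}$ where $m_i$ is the \emph{first} block met by $\mathrm{supp}\,\mu_i$, and nothing in it prevents $\mu_{i+1}$ from beginning in the block in which $\mu_i$ ends. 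Concretely, if $F_1=\{3,4\}$, then $\mu_1=(1/2)^p e_3^*$ (from $H=\{3\}$) and $\mu_2=e_4^*$ (from $H=\{3,4\}$) are $p$-measures with $\mu_1<\mu_2$ and $m_1=m_2=1$, so $\{\min F_{m_i}: i\leq 2\}=\{3\}\in S_1$ and the pair is admissible; yet $\mu(F_1)=(1/2)^p+1>1=(|F_1|/|F_1|)^p$, so the per-block bound, and with it the statement of the lemma, fails for this $\mu$. Thus the configuration you hope to rule out via the Schreier condition is not ruled out by it. The lemma holds only under the evidently intended reading of admissibility (the one realized by every admissible sequence actually constructed in the paper), namely that distinct $\mu_i$'s meet pairwise disjoint collections of blocks $F_n$; under that reading your ``hardest part'' is immediate and the rest of your argument goes through verbatim. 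As written, however, your proof has a genuine gap at precisely the step you deferred, and the tool you planned to close it with is the wrong one.
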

\begin{proof}
We need only prove \((1)\) as \((2)\) follows immediately from \((1)\).
Let \(\mu \in \mathcal{M}_p\). In case \(\mu = e_n^*\)
for some \(n \in \mathbb{N}\), then the assertion holds trivially.
Every other element of \(\mathcal{M}_p\) admits a representation
of the form \(\mu = \sum_{n=1}^d (|H_n|/|F_n|)^p e_{\max H_n}^* \) for some \(d \in \mathbb{N}\) 
and finite sets \((H_n)_{n=1}^d \) with \(H_n\) an initial segment of \(F_n\), for all \(n \leq d \). 

Now let \(n \in \mathbb{N}\) with \(n \leq d \). It is clear that either \(H_n\) is an initial segment
of \(G_n \), or \(\max H_n > \max G_n\). If the former, then 
\(\mu(G_n) = (|H_n|/|F_n|)^p \leq (|G_n|/ |F_n| )^p \).
If the latter, then \(\mu(G_n) = 0\).
In any case, we have that \(\mu(G_n) \) \(\leq (|G_n|/ |F_n| )^p \), for all \(n \leq d\).
Of course, \(\mu(G_n) = 0\), for all \(n > d\). Hence,
\[\sum_n \mu(G_n) \leq \sum_n (|G_n|/|F_n|)^p  \leq 1, \] 
as required.
\end{proof}
\begin{remark}
It follows directly from the definition of the norming set \(\mathcal{M}_p\), that for every
\(p \geq 1\), the subsequence \((e_{\max F_n})\) of the basis \((e_n)\) of \(E_p\) generates
a spreading model equivalent to the usual \(\ell_1\) basis. In fact, it is equivalent to the
corresponding subsequence of the natural basis of Schreier's space. Consequently, \(E_p\)
contains uniformly complemented \(\ell_1^n\)'s.
\end{remark}
\section{ \(E_p\), \(p > 1\), admits a quotient isomorphic to \(\ell_p\)} \label{S2}
In this section we assume that \(p > 1\) and that \(\sum_n 1 / |F_n| < 1 \). Our objective
is to show that \(E_p\) admits a quotient isomorphic to \(\ell_p\). Let \(q > 1\) be the 
exponent conjugate to \(p\). We first introduce some notation.

{\bf Notation}. Fix scalars \(\alpha > 1 \) and \(\beta > 0\) satisfying
\(\alpha + \frac{\beta}{q-1} < p\). Let \(\gamma > 0\) satisfy
\(\alpha + \frac{\beta}{q-1} + \gamma = p\). 

We also fix a scalar \(\theta \in (0,1)\) and set \(\epsilon_n = \theta^{\alpha^n}\), for
all \(n \in \mathbb{N}\). Note that \(\epsilon_{n+1} = \epsilon_n^{\alpha}\), for all \(n \in \mathbb{N}\).
It is clear that \(\sum_n n \epsilon_n^{\lambda} < \infty\), for
all \(\lambda > 0\).

We shall also require that \(\min F_n > 1 + 2/\epsilon_n \), for all \(n \in \mathbb{N}\).
The main result of this section is the following
\begin{Thm} \label{T3}
Let \(u_n^* = \sum_{i \in F_n} (1/|F_i|) e_i^* \), for all \(n \in \mathbb{N}\).
Then \((u_n^*)\) is equivalent to the usual basis of \(\ell_q\).
\end{Thm}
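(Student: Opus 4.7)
The plan is to show $(u_n^*)$ is equivalent to the $\ell_q$-basis by establishing matching upper and lower estimates, both of which go through the averaging map $Q\colon E_p \to \ell_p$ defined by $Q(x)_n = u_n^*(x) = \frac{1}{|F_n|}\sum_{i \in F_n} x_i$. The upper estimate $\|\sum_n a_n u_n^*\|_{E_p^*} \le C\|a\|_{\ell_q}$ is equivalent to boundedness of $Q$, since then $Q^*\colon \ell_q \to E_p^*$ sends $f_n \mapsto u_n^*$. The lower estimate $\|\sum_n a_n u_n^*\|_{E_p^*} \ge c\|a\|_{\ell_q}$ follows if $Q$ admits a bounded right inverse $T\colon \ell_p \to E_p$: testing $\sum a_n u_n^*$ against $Tb$ for $b_n = \operatorname{sgn}(a_n)|a_n|^{q-1}$ then yields the bound, using $(q-1)p = q$ to convert the $\ell_p$-norm of $b$ into $\|a\|_{\ell_q}^{q-1}$.

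For the upper bound, given $x \in B_{E_p}$ and $c_n = u_n^*(x)$, I would stratify the indices $n$ according to dyadic scales of $|c_n|$ (or the scales $\epsilon_\ell$), and on each stratum $N_\ell$ assemble a single $\nu_\ell \in \mathcal{M}_p$ as an admissible sum of $p$-measures. Inside each $F_n$ with $n \in N_\ell$, one picks an initial segment $G_n$ of $F_n$ so that $(|G_n|/|F_n|)^p$ matches the target scale and the choice of $\max G_n$ pairs with the sign of $c_n$, so that $\nu_\ell(x)$ recaptures the stratum's contribution to $\sum|c_n|^p$. The admissibility condition $k \le \min F_{m_1}$ caps the number of $p$-measures that can be combined, but the growth $\min F_n > 1 + 2/\epsilon_n$ guarantees these cardinalities suffice at every level. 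Summing $\nu_\ell(x) \le 1$ over $\ell$, the parameter identity $\alpha + \beta/(q-1) + \gamma = p$ combined with $\sum_n n\epsilon_n^\lambda < \infty$ is what should make the resulting series telescope into a uniform bound on $\sum|c_n|^p$.

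For the lower bound, choose vectors $v_n \in \operatorname{span}\{e_i : i \in F_n\}$ with $u_n^*(v_n) = 1$ (the simplest option being $v_n = \sum_{i \in F_n} e_i$) and verify that $T(b) = \sum_n b_n v_n$ is bounded $\ell_p \to E_p$. For a single $p$-measure $\mu$ with weights $s_n = (|G_n|/|F_n|)^p$, one has $|\mu(Tb)| = |\sum_n b_n s_n| \le \|b\|_{\ell_p}(\sum_n s_n^q)^{1/q} \le \|b\|_{\ell_p}$, since $s_n \in [0,1]$ and $\sum s_n \le 1$. For an admissible sum $\sum_{l=1}^k \mu_l$, the aggregate $t_n = \sum_l (|G_n^{(l)}|/|F_n|)^p$ must be controlled in $\ell_q$; here one balances the admissibility cap $k \le \min F_{m_1}$ against the strict monotonicity of the initial segments $G_n^{(l)}$ inside each $F_n$, again leveraging the rapid growth of $\min F_n$ dictated by the $\epsilon_n$.

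The hard part will be the upper estimate. The only information about $x \in B_{E_p}$ is the pointwise bound $|\mu(x)| \le 1$ for $\mu \in \mathcal{M}_p$, and extracting from it an $\ell_p$-statement about the averages $(c_n)$ requires a scale-by-scale construction that simultaneously respects the single-mass-per-$F_n$ template of $p$-measures and the $S_1$-admissibility cap on the number of summands. The parameter identity $\alpha + \beta/(q-1) + \gamma = p$, together with the super-exponential growth of $\min F_n$ built into the construction, are precisely what should allow the geometric sum over levels to close and deliver a bound independent of $x$.
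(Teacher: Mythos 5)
Your lower-bound strategy cannot work as stated, and the obstruction is structural rather than technical. You propose a bounded linear right inverse $T\colon \ell_p \to E_p$ for $Q$, with $Tb = \sum_n b_n v_n$ and $v_n = \sum_{i \in F_n} e_i$. But any bounded linear $T$ with $QT = \mathrm{id}$ is an isomorphic embedding of $\ell_p$ into $E_p$, and $E_p$ is $c_0$-saturated (Corollary \ref{C1}), so no such $T$ exists for \emph{any} choice of $(v_n)$. Concretely, for your choice: take $N \le \min F_1$ and the admissible sequence of $p$-measures $\mu_n = e^*_{\max F_n}$ (i.e.\ $G_n = F_n$), $1 \le n \le N$; then $\sum_{n=1}^N \mu_n \in \mathcal{M}_p$ and evaluates to $N$ on $\sum_{n=1}^N v_n$, so $\bigl\| \sum_{n \le N} v_n \bigr\| \ge N$ rather than $O(N^{1/p})$ --- this is exactly the $\ell_1$-spreading-model phenomenon for $(e_{\max F_n})$ noted in the remark after Lemma \ref{L1}. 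Your H\"older computation is fine for a single $p$-measure, but an admissible sum has total mass bounded only by $\min F_{m_1}$, which destroys the estimate. The paper's Lemma \ref{L2} avoids this by testing against a \emph{nonlinear} selection adapted to $(a_i)$: the $0$--$1$ vector $u = \sum_i \sum_{k \in G_i} e_k$ with $G_i$ an initial segment of $F_i$ of relative length roughly $a_i^{q-1}$. The point is that a point mass $(|H|/|F_i|)^p e^*_{\max H}$ sees the indicator of the initial segment $G_i$ only when $H \subseteq G_i$, in which case its weight is at most $(|G_i|/|F_i|)^p$; hence $\|u\| \le 1$ by Lemma \ref{L1} no matter how many $p$-measures an admissible sum contains. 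Modulating the \emph{width} of an initial segment, rather than the height of a full block, is the idea you are missing.

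For the upper bound your plan has the right outer shape (build elements of $\mathcal{M}_p$ adapted to $x$ and use $\nu(x) \le \|x\|$), but you stratify by the wrong variable. A $p$-measure evaluates $x$ at the single point $\max G_n$, and that point is tied to the length of $G_n$, hence to the weight $(|G_n|/|F_n|)^p$; nothing links $x_{\max G_n}$ to the average $c_n = u_n^*(x)$ when the coordinates of $x$ on $F_n$ are unevenly distributed (if all the mass sits on the final half of $F_n$, any nonzero evaluation forces $|G_n|/|F_n| > 1/2$ no matter how small $c_n$ is, exhausting the budget $\sum_n (|G_n|/|F_n|)^p \le 1$). The paper therefore stratifies the \emph{individual coordinates} $a_j$, $j \in F_i$, into the level sets $\{j : \epsilon_{n+1} \le a_j < \epsilon_n\}$: the smallest initial segment $G_i^n$ containing such a level set ends at a point where $a_{j_i^n} \ge \epsilon_{n+1}$, so the single evaluation $a_{j_i^n}(|G_i^n|/|F_i|)^p$ controls that level's entire contribution to $c_i$ up to the factor $\epsilon_n/\epsilon_{n+1}$ (this is where $\alpha$ enters). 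The second missing ingredient is the H\"older-type splitting of Lemma \ref{L4} (the sets $I_0$ and $I_n$, where $\rho_{ni}$ is compared with $\epsilon_n^{\beta}\rho_i^{q-1}$), which couples the $\ell_q$ coefficients $\rho_i$ to the level contributions, produces the exponent $p = 1 + \tfrac{1}{q-1}$, and supplies the geometric factors $\epsilon_n^\beta$, $\epsilon_n^\gamma$ that make the sum over levels converge. Without these two devices the telescoping you invoke does not close.
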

In the next series of lemmas, we show that \((u_n^*)\) satisfies upper and lower 
\(\ell_q\) estimates. The lower estimates will be straightforward, while the upper ones require
some technical calculations. In the sequel, \((u_n^*)\) is the sequence given in Theorem \ref{T3}.
\begin{Lem} \label{L2}
\((u_n^*)\) dominates the usual \(\ell_q\) basis.
\end{Lem}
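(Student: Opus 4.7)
The plan is a standard H\"older duality: given finitely supported $(a_n)$, produce a test vector $x \in E_p$ of norm at most $1$ on which $\sum_n a_n u_n^*$ evaluates to at least $c\|(a_n)\|_q$, for a constant $c > 0$ independent of $(a_n)$. Since $(e_n)$ is suppression-$1$ unconditional, so is the dual block basis $(u_n^*)$, and I may reduce to the case $a_n \geq 0$.

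I would begin from the continuous optimizer of the Lagrange problem $\sup\{\sum_n a_n t_n : t_n \in [0,1],\ \sum_n t_n^p \leq 1\}$, namely $t_n^\ast = (a_n/\|(a_n)\|_q)^{q-1}$. Each $t_n^\ast$ lies in $[0,1]$ because $a_n \leq \|(a_n)\|_q$, and the identity $(q-1)p = q$ yields simultaneously $\sum_n (t_n^\ast)^p = 1$ and $\sum_n a_n t_n^\ast = \|(a_n)\|_q$.

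To convert this into an admissible test vector through Lemma \ref{L1}(2), I would round down: let $G_n$ be the initial segment of $F_n$ of cardinality $\lfloor t_n^\ast |F_n|\rfloor$ and set $x = \sum_n \sum_{i \in G_n} e_i$. Then $t_n := |G_n|/|F_n| \leq t_n^\ast$, so $\sum_n t_n^p \leq 1$, and Lemma \ref{L1}(2) gives $\|x\| \leq 1$. Since $u_m^*(x) = |G_m|/|F_m|$, the evaluation is
\[\Big(\sum_n a_n u_n^*\Big)(x) \;=\; \sum_n a_n\,\frac{|G_n|}{|F_n|} \;\geq\; \sum_n a_n t_n^\ast - \sum_n \frac{a_n}{|F_n|} \;\geq\; \|(a_n)\|_q\Big(1 - \Big(\sum_n |F_n|^{-p}\Big)^{1/p}\Big),\]
where the final inequality is H\"older applied to the rounding-error sum $\sum_n a_n/|F_n|$.

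The only real obstacle is the integrality constraint: the coefficients $|G_n|/|F_n|$ are forced to be multiples of $1/|F_n|$, so the continuous optimum must be rounded, and the resulting error term must be kept strictly smaller than $\|(a_n)\|_q$. This is exactly what the section's standing hypothesis $\sum_n 1/|F_n| < 1$ provides: together with $p > 1$ it forces $\sum_n |F_n|^{-p} \leq \sum_n |F_n|^{-1} < 1$, so the constant $c := 1 - (\sum_n |F_n|^{-p})^{1/p}$ is strictly positive and gives the desired domination of the usual $\ell_q$ basis.
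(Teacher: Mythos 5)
Your proof is correct and follows essentially the same route as the paper: test against $x=\sum_n\sum_{k\in G_n}e_k$ with $G_n$ an initial segment of $F_n$ approximating the H\"older-optimal weights $(a_n/\|(a_n)\|_q)^{q-1}$, invoke Lemma \ref{L1} to get $\|x\|\le 1$, and absorb the rounding error using the standing hypothesis $\sum_n 1/|F_n|<1$. The only cosmetic differences are that the paper bounds the rounding error by $\sum_n 1/|F_n|$ directly (after normalizing so that $a_i\in[0,1]$) rather than via H\"older, and it carries out the reduction to nonnegative coefficients explicitly by splitting the index set into $I^{+}$ and $I^{-}$ at the cost of a factor $2$, which is the precise content of your appeal to suppression $1$-unconditionality.
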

\begin{proof}
Note first that \((u_n^*)\) is a normalized (convex) block basis of \((e_n^*)\) in \(E_p^*\).
This is so since \(\|\sum_{i \in F_n } e_i \| = 1\), for all \(n \in \mathbb{N}\) (observe that the support of every
member of \(\mathcal{M}_p\) meets each \(F_n\) in at most one point).   
We thus obtain that \((u_n^*)\) is normalized and suppression \(1\)-unconditional.

Now let \(n \in \mathbb{N}\) and \((a_i)_{i=1}^n\) be scalars in \([0,1]\) with \(\sum_{i=1}^n a_i^q \leq 1\).
For each \(i \leq n\) choose an initial segment \(G_i\) of \(F_i\) so that
\[|G_i|/|F_i| \leq a_i^{q-1} < (|G_i| / |F_i|) + (1/|F_i|).\]
This choice ensures that 
\[\sum_{i=1}^n (|G_i|/|F_i|)^p \leq \sum_{i=1}^n a_i^{p(q-1)} = \sum_{i=1}^n a_i^q \leq 1.\]
Let \(u = \sum_{i=1}^n \sum_{k \in G_i} e_k \). We deduce from Lemma \ref{L1}, that \(\|u\| \leq 1\).
It follows now that 
\begin{align}
\biggl \|\sum_{i=1}^n a_i u_i^*  \biggr \| &\geq \sum_{i=1}^n a_i u_i^*(u) 
= \sum_{i=1}^n a_i \sum_{k \in F_i} (1/|F_i|) e_k^*(u) = \sum_{i=1}^n a_i (|G_i|/|F_i|) \notag \\
&\geq \sum_{i=1}^n a_i (a_i^{q-1} - (1/|F_i|)) \geq \sum_{i=1}^n a_i^q - \sum_{i=1}^n (1/|F_i|). \notag
\end{align}
Next suppose that \(n \in \mathbb{N}\) and \((a_i)_{i=1}^n\) is a scalar sequence
satisfying \(\sum_{i=1}^n |a_i|^q\) \(=1\). Let \(I^{+} = \{i \leq n : \, a_i \geq 0 \}\)
and \(I^{-} = I \setminus I^{+}\). Our preceding work yields that
\[ \biggl \|\sum_{i \in I^j} a_i u_i^*  \biggr \| \geq  \sum_{i \in I^j} |a_i|^q -
\sum_{i \in I^j} (1/|F_i|), \, \forall \, j \in \{+, -\}.\]
We deduce now from the above and the fact that \((u_n^*)\) is suppression \(1\)-unconditional, that
\[2 \biggl \|\sum_{i=1}^n a_i u_i^*  \biggr \| \geq 1 - \sum_{i=1}^\infty (1/|F_i|) > 0. \]
Therefore, letting \(A = (1/2)(1 - \sum_{i=1}^\infty (1/|F_i|)) > 0\), we obtain that
\[\biggl \|\sum_{i=1}^n a_i u_i^* \biggr \| \geq A 
\biggl ( \sum_{i=1}^n |a_i|^q \biggr )^{1/q}\] 
for every \(n \in \mathbb{N}\) 
and all choices of scalars \((a_i)_{i=1}^n \subset \mathbb{R}\). The proof of the lemma is now complete.
\end{proof}  
\begin{Lem} \label{L3}
Let \(u = \sum_n a_n e_n \) be a finitely supported vector in \(E_p\), with \(\|u \| \leq 1\)
and \(a_n \geq 0\), for all \(n \in \mathbb{N}\). Suppose that there exist \(n \in \mathbb{N}\),
a finite set \(I \subset \mathbb{N}\) and integers \((j_i)_{i \in I}\) 
with \(j_i \in F_i \) for all \(i \in I\), so that \(a_{j_i} \geq \epsilon_n \), for all \(i \in I\).
Then, letting \(G_i = [\min F_i , j_i] \cap F_i\) for all \(i \in I\), the following estimate holds:
\[\sum_{i \in I} a_{j_i} (|G_i|/|F_i|)^p \leq (n+1) \|u\|.\]
\end{Lem}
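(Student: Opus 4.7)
The plan is to bound $\sum_{i\in I}a_{j_i}(|G_i|/|F_i|)^p$ by constructing $n+1$ functionals $\nu_0,\nu_1,\ldots,\nu_n\in \mathcal{M}_p$ whose pairings with $u$ together recover the left-hand side; since $\nu_k(u)\le \|u\|$ for each $k$, summing the $n+1$ pairings will produce the factor $(n+1)\|u\|$. Setting $r_i:=|G_i|/|F_i|$, one observes that the left-hand side is $\mu(u)$ for the formal measure $\mu:=\sum_{i\in I} r_i^p\, e_{j_i}^*$, so the task reduces to writing $\mu = \nu_0+\nu_1+\cdots+\nu_n$ with each $\nu_k\in \mathcal{M}_p$.

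The decomposition is driven by the scale hierarchy $\epsilon_0>\epsilon_1>\cdots>\epsilon_n$ already at hand through the hypothesis $a_{j_i}\ge \epsilon_n$. I would partition $I$ into $n+1$ levels by setting $I_0:=\{i\in I:a_{j_i}\ge\epsilon_0\}$ and $I_k:=\{i\in I:\epsilon_k\le a_{j_i}<\epsilon_{k-1}\}$ for $1\le k\le n$, so that $I=\bigsqcup_{k=0}^n I_k$, and then defining $\nu_k:=\sum_{i\in I_k}r_i^p\,e_{j_i}^*$. The pairing $\nu_k(u)=\sum_{i\in I_k}a_{j_i}r_i^p$ gives exactly the $k$-th slice of the target sum. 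To certify $\nu_k\in \mathcal{M}_p$ I would use two cases: if $\sum_{i\in I_k}r_i^p\le 1$ then $\nu_k$ is a single $p$-measure and we are done; otherwise, split $I_k$ greedily into successive sub-blocks $I_k^1<\cdots<I_k^s$ each of $r^p$-mass at most $1$, making each partial sum $\sum_{i\in I_k^t}r_i^p\,e_{j_i}^*$ a $p$-measure. These combine into an admissible sum (hence into $\nu_k\in\mathcal{M}_p$) precisely when $\{\min F_{\min I_k^t}:t\le s\}\in S_1$, i.e.\ when $s\le \min F_{\min I_k}$. The key estimate controlling $s$ is the pairing inequality $\|u\|\ge \nu(u)\ge \epsilon_k\sum_{i\in I_k} r_i^p$, valid for any admissible $\nu$ supported in $I_k$, which yields $\sum_{i\in I_k}r_i^p\le 1/\epsilon_k$ and thus $s\le 1+1/\epsilon_k$; combined with the growth hypothesis $\min F_m>1+2/\epsilon_m$, this guarantees $s\le \min F_{\min I_k}$ and the admissibility condition.

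The main obstacle is the edge case in which $\min I_k$ is too small for $\min F_{\min I_k}$ to dominate $1+1/\epsilon_k$, since the growth condition is indexed by the position in $(F_n)$ rather than by the scale $k$. I expect this to be resolved by a slightly more careful partition, for instance by absorbing the low-index portion of each $I_k$ into $\nu_0$ (which can be allowed to carry a constant-sized overhead) or by reorganizing the scales so that $\min I_k$ grows with $k$. Once this bookkeeping is in place, the $n+1$ levels of the partition correspond to $n+1$ functionals $\nu_k\in \mathcal{M}_p$, and the constant $(n+1)$ in the conclusion is the genuine count of scales needed to span the interval $[\epsilon_n,1]$ to which the $a_{j_i}$'s are confined.
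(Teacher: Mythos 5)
Your reduction of the left-hand side to a sum of pairings $\nu_k(u)$ with $\nu_k\in\mathcal{M}_p$ is the right general shape, but the decomposition you choose does not close, and the obstacle you flag at the end is not an edge case — it is the heart of the matter. To certify $\nu_k\in\mathcal{M}_p$ you must verify admissibility, i.e.\ $s\le \min F_{\min I_k}$ where $s$ is the number of greedy sub-blocks of $I_k$; the standing growth condition $\min F_m>1+2/\epsilon_m$ is indexed by the \emph{position} $m$, while your levels are indexed by the \emph{scale} of $a_{j_i}$, and an index $i\in I_k$ (with $a_{j_i}$ near $\epsilon_k$) can perfectly well satisfy $i<k$, so that $\min F_{\min I_k}$ is far smaller than the $1+2/\epsilon_k$ you need. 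Your proposed repairs do not work: "absorbing the low-index portion into $\nu_0$" does not make anything admissible, and if "absorbing" means bounding those terms separately by $\|u\|$ each, the low-index set is $\{i\in I:\ i<k(i)\}\subset\{i\in I:\ i<n\}$, which has up to $n-1$ elements — not a constant overhead — and together with your $n+1$ levels this yields roughly $2n\|u\|$, not $(n+1)\|u\|$. There is also a circularity in your "key estimate": the inequality $\sum_{i\in I_k}r_i^p\le 1/\epsilon_k$ is derived by pairing $u$ against an admissible functional carrying \emph{all} of $I_k$ with full weights, which is exactly the object whose existence is in question; it must instead be run as a proof by contradiction on an admissible \emph{prefix} of the blocks, and that prefix's admissibility again requires the lower bound on $\min F_{\min I_k}$ that fails. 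A further small gap: your bound $s\le 1+1/\epsilon_k$ on the number of greedy blocks tacitly assumes each non-final block has mass bounded below, which fails when some $r_i^p$ is close to $1$; these large-weight indices need separate treatment.

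The paper's proof reveals that the constant $n+1$ does not count scales at all. It first discards $I\setminus I_1=\{i\in I:\ i<n\}$, bounding its contribution crudely by $(n-1)\max_i a_{j_i}\le (n-1)\|u\|$ — this is precisely the problematic low-index set in your scheme. On the remainder $\{i\in I:\ i\ge n\}$ the single uniform scale $\epsilon_n$ suffices: since now $\min F_i\ge\min F_n>1+2/\epsilon_n$ for every relevant $i$, the indices with $(|G_i|/|F_i|)^p\ge 1/2$ number at most $[2/\epsilon_n]$ (each would contribute at least $\epsilon_n/2$ to an admissible pairing) and form one element $\tau_2$ of $\mathcal{M}_p$, while the indices with $(|G_i|/|F_i|)^p<1/2$ are split greedily into at most $1+[2/\epsilon_n]$ blocks of mass in $(1/2,1]$ forming a single admissible sum $\tau_3$. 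This gives $(n-1)+1+1=n+1$. So the correct fix to your argument is to abandon the $n+1$-level stratification entirely: once the indices below $n$ are paid for, one scale and a large/small weight dichotomy do the rest.
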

\begin{proof}
Set \(I_1 = \{i \in I : \, i \geq n \}\). Then, set \(I_2 = \{i \in I_1 : \, (|G_i|/|F_i|)^p \geq 1/2 \}\)
and \(I_3 = I_1 \setminus I_2 \). It is clear that
\[\sum_{i \in I \setminus I_1} a_{j_i} (|G_i|/|F_i|)^p \leq (n-1) \max_{i \in I} a_{j_i} \leq (n-1) \|u\|.\]
The assertion of the lemma will follow once we show that for each \(s \in \{2,3\}\) there exists
\(\tau_s \in \mathcal{M}_p\) such that
\[\sum_{i \in I_s} a_{j_i} (|G_i|/|F_i|)^p = \tau_s(u).\]
To this end, we first claim that \(|I_2| \leq [2/\epsilon_n]\).

Indeed, if the claim were false, \(I_2\) would contain at least \(m = 1 + [2/\epsilon_n]\) elements.
So let
\(i_1 = \min I_2\) and 
choose \(i_1 < i_2 < \dots < i_m \) in \(I_2\). Consider the \(p\)-measures
\(\nu_k = (|G_{i_k}|/|F_{i_k}|)^p e_{j_{i_k}}^*\), for all \(k \leq m\) (observe that by definition,
\(G_i\) is an initial segment of \(F_i\) for all \(i \in I\)).
We know that \(i_1 \geq n\) and so \(\min F_{i_1} \geq \min F_n > 2/\epsilon_n\), by our initial
assumptions on \((F_n)\).
This in turn implies that \(\min F_{i_1} \geq m\) and thus,
\(\{\min F_{i_k} : \, k \leq m \} \in S_1\). It follows now that
\((\nu_k)_{k=1}^m\) is admissible and so \(\nu = \sum_{k=1}^m \nu_k\)
belongs to \(\mathcal{M}_p\). Therefore,
\[1 \geq \|u\| \geq \nu(u) = \sum_{k=1}^m a_{j_{i_k}}(|G_{i_k}|/|F_{i_k}|)^p \geq m (\epsilon_n /2), \]
using the hypothesis that \(a_{j_{i_k}} \geq \epsilon_n \) and the fact that \(i_k \in I_2\) for all \(k \leq m\).
We infer from the above, that \(m \leq 2/\epsilon_n\), contradicting the definition of \(m\).
This contradiction proves our claim.

Setting now \(\tau_2 = \sum_{i \in I_2} (|G_i|/|F_i|)^p e_{j_i}^*\) we obtain, via the claim, 
that \(\{\min F_i : \, i \in I_2 \} \in S_1\), as \(\min F_{i_1} \geq \min F_n > 2/\epsilon_n\).
Hence, \(\tau_2 \in \mathcal{M}_p\). It is also clear that
\(\sum_{i \in I_2} a_{j_i} (|G_i|/|F_i|)^p = \tau_2(u) \).

We next pass to the search for \(\tau_3\). We first choose a (necessarily non-empty) initial segment
\(J_1\) of \(I_3\) which is maximal with respect to the condition
\(\sum_{i \in J_1} (|G_i|/|F_i|)^p \leq 1\). In case \(J_1 = I_3\), we set
\(\tau_3 = \sum_{i \in I_3} (|G_i|/|F_i|)^p e_{j_i}^* \). Then, \(\tau_3\) is a \(p\)-measure
and so it belongs to \(\mathcal{M}_p\). It is clear that, in this case, 
\(\sum_{i \in I_3} a_{j_i} (|G_i|/|F_i|)^p = \tau_3(u) \).

If \(J_1\) is a proper initial segment of \(I_3\), then, by maximality, we must have
\[1/2 < \sum_{i \in J_1} (|G_i|/|F_i|)^p \leq 1, \]
as \( (|G_i|/|F_i|)^p < 1/2\), for every \(i \in I_3\).
We now set \(\mu_1 = \sum_{i \in J_1} (|G_i|/|F_i|)^p e_{j_i}^* \). 
This is a \(p\)-measure satisfying \(\mu_1(u) \geq \epsilon_n /2\) because
\(a_{j_i} \geq \epsilon_n \), for all \(i \in I\), and 
\(\sum_{i \in J_1} (|G_i|/|F_i|)^p > 1/2\).

We repeat the same process to \(I_3 \setminus J_1\) and obtain an initial segment \(J_2\)
of \(I_3 \setminus J_1\), and a \(p\)-measure \(\mu_2 = \sum_{i \in J_2} (|G_i|/|F_i|)^p e_{j_i}^* \)
so that either \(J_1 \cup J_2 = I_3\), or, \(J_2\) is a proper initial segment of
\(I_3 \setminus J_1\) satisfying \(\mu_2(u) \geq \epsilon_n /2\). 
If the former, the process stops. If the latter, the process continues.
Because \(I_3\) is finite, this process will terminate after a finite number of steps, say \(k\).
We shall then have produced successive subintervals \(J_1 < \dots < J_k \) of \(I_3\) 
with \(I_3 = \cup_{r=1}^k J_r\), and
\(p\)-measures \(\mu_1 < \dots < \mu_k \) with
\(\mu_r = \sum_{i \in J_r} (|G_i|/|F_i|)^p e_{j_i}^* \), for all \(r \leq k\).
Moreover, \(\mu_r(u) \geq \epsilon_n /2\), for all \(r < k\).

We claim that \(k \leq m = 1 + [2/\epsilon_n ]\). 
Indeed, assuming \(m < k\), we have by the choice of \(k\), that
\(\mu_r(u) \geq \epsilon_n /2\), for all \(r \leq m\).
But also, \(m < \min F_n \leq \min F_{\min J_1}\), since \(J_1 \subset I_1 \), and thus,
\((\mu_r)_{r=1}^m\) is admissible. Therefore,
\[1 \geq \|u\| \geq \sum_{r=1}^m \mu_r (u) \geq m \epsilon_n /2,\]
whence \(m \leq 2/\epsilon_n\) which is a contradiction. We finally set
\(\tau_3 = \sum_{r=1}^k \mu_r \).
Since \(k \leq m \leq \min F_{\min J_1}\), we have that \((\mu_r)_{r=1}^k \)
is admissible and so \(\tau_3 \in \mathcal{M}_p\). But also, \(I_3 = \cup_{r=1}^k J_r\) and so 
\(\sum_{i \in I_3} a_{j_i} (|G_i|/|F_i|)^p = \tau_3(u) \), completing the proof of
the lemma.
\end{proof}
\begin{Lem} \label{L4}
Let \(u = \sum_n a_n e_n \) be a finitely supported vector in \(E_p\), with \(\|u \| \leq 1\)
and \(a_n \geq 0\), for all \(n \in \mathbb{N}\). Let \(I\) be a finite subset of \(\mathbb{N}\).
For each \(i \in I\), let \(G_i^0\) denote the smallest initial segment of
\(F_i\) that contains \(\{j \in F_i : \, a_j \geq \epsilon_1 \}\) as a subset, while for \(n \in \mathbb{N}\),
let \(G_i^n\) denote the smallest initial segment of
\(F_i\) that contains \(\{j \in F_i : \, \epsilon_{n+1} \leq a_j < \epsilon_n \}\) as a subset.
Also, let \(j_i^n = \max G_i^n \) for every \(n \in \mathbb{N} \cup \{0\}\) with \(G_i^n \ne \emptyset\).
Then, given a finite sequence of non-negative scalars \((\rho_i)_{i \in I}\), the following estimates hold:
\begin{align}
\sum_{i \in I} \rho_i \sum_{j \in F_i : \, a_j \geq \epsilon_1} a_j /|F_i| &\leq
(1/\epsilon_1) \sum_{i \in I} a_{j_i^0} (|G_i^0|/|F_i|)^p + \sum_{i \in I} \rho_i^q  \label{E1} \\
\sum_{i \in I} \rho_i \sum_{j \in F_i : \, a_j < \epsilon_n} a_j /|F_i| &\leq 
\sum_{i \in I} \rho_i \sum_{j \in F_i : \, a_j < \epsilon_{n + 1}} a_j /|F_i| \, +  \label{E2} \\
&+ \epsilon_n^\gamma \sum_{i \in I} a_{j_i^n} (|G_i^n|/|F_i|)^p + \epsilon_n^\beta \sum_{i \in I} \rho_i^q ,
\, \forall \, n \in \mathbb{N}. \notag 
\end{align}
where, in the above, each summand which includes a term of the form \(a_{j_i^n}\), with \(G_i^n = \emptyset\),
is interpreted to be equal to \(0\).
\end{Lem}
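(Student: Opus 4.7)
The plan is to prove each inequality by reducing the inner sum $\sum_j a_j/|F_i|$ to an elementary bound in terms of the ratio $|G_i^n|/|F_i|$, and then applying a (weighted) Young inequality whose parameters are calibrated so that the identity $\alpha + \beta/(q-1) + \gamma = p$ forces the exponents of $\epsilon_n$ on the two sides to balance. Whenever $G_i^n = \emptyset$ the corresponding inner sum is empty, so the stated convention makes the estimate trivial; I will henceforth silently ignore this edge case.

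For (E1), the observation is that $\{j \in F_i : a_j \geq \epsilon_1\} \subset G_i^0$ and every coefficient satisfies $a_j \leq \|u\| \leq 1$ (since $e_j^* \in \mathcal{M}_p$). Hence $\sum_{j:\,a_j \geq \epsilon_1} a_j/|F_i| \leq |G_i^0|/|F_i|$. Multiplying by $\rho_i$ and applying Young's inequality $xy \leq x^q/q + y^p/p$ with $x = \rho_i$ and $y = |G_i^0|/|F_i|$, and then using $a_{j_i^0} \geq \epsilon_1$ (which holds by definition of $G_i^0$) to write $(|G_i^0|/|F_i|)^p \leq (1/\epsilon_1)\,a_{j_i^0}(|G_i^0|/|F_i|)^p$, one obtains (E1) after summing over $i$.

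For (E2), first partition the inner sum at the threshold $\epsilon_{n+1}$:
\[\sum_{j \in F_i :\, a_j < \epsilon_n} \frac{a_j}{|F_i|} = \sum_{j \in F_i :\, a_j < \epsilon_{n+1}} \frac{a_j}{|F_i|} + \sum_{j \in F_i :\, \epsilon_{n+1} \leq a_j < \epsilon_n} \frac{a_j}{|F_i|}.\]
The first piece, multiplied by $\rho_i$ and summed over $i$, supplies the first term on the right of (E2). The second is supported on a subset of $G_i^n$ and each $a_j$ there is $< \epsilon_n$, so it is bounded by $\epsilon_n |G_i^n|/|F_i|$. What remains is to show
\[\epsilon_n \sum_{i \in I} \rho_i \cdot \frac{|G_i^n|}{|F_i|} \;\leq\; \epsilon_n^\beta \sum_{i \in I} \rho_i^q \;+\; \epsilon_n^\gamma \sum_{i \in I} a_{j_i^n}\bigl(|G_i^n|/|F_i|\bigr)^p.\]

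This last step is the heart of the argument and the place where the choice of $\alpha, \beta, \gamma$ enters. Apply the weighted Young inequality $xy \leq \lambda x^q/q + \lambda^{-p/q} y^p/p$ with $x = \rho_i$, $y = |G_i^n|/|F_i|$ and $\lambda = q\,\epsilon_n^{\beta - 1}$. After multiplying by $\epsilon_n$, the coefficient of $\rho_i^q$ becomes $\epsilon_n \lambda / q = \epsilon_n^\beta$, exactly as required. For the other term, using $p/q = p - 1$ and $\beta - 1 = (1 - \alpha - \gamma)/(p - 1)$ (a direct rearrangement of $\alpha + \beta/(q-1) + \gamma = p$), a short calculation yields $1 - (\beta - 1)p/q = \alpha + \gamma$, so the coefficient of $(|G_i^n|/|F_i|)^p$ is $(q^{-p/q}/p)\,\epsilon_n^{\alpha + \gamma}$. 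Since $a_{j_i^n} \geq \epsilon_{n+1} = \epsilon_n^\alpha$ by the definition of $G_i^n$, and $q^{-p/q}/p \leq 1$, this coefficient is at most $\epsilon_n^\gamma a_{j_i^n}$, and summing over $i$ completes (E2).
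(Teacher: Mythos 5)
Your proof is correct and follows essentially the same route as the paper: the same splitting of the inner sums at the thresholds \(\epsilon_{n+1}\), the same crude bounds \(\sum_{j} a_j/|F_i| \leq \epsilon_n |G_i^n|/|F_i| \leq (\epsilon_n/\epsilon_{n+1})\, a_{j_i^n}(|G_i^n|/|F_i|)\), and the same exponent bookkeeping via \(\alpha + \beta/(q-1) + \gamma = p\). The only difference is cosmetic: where you invoke the weighted Young inequality \(xy \leq \lambda x^q/q + \lambda^{-p/q}y^p/p\), the paper splits \(I\) into the set \(I_n = \{i : \rho_{ni} \geq \epsilon_n^\beta \rho_i^{q-1}\}\) and its complement and bounds each part by one of the two terms, which is the same inequality proved by hand (and yields slightly worse constants than yours).
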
 
\begin{proof}
We first verify the validity of \eqref{E1}. We set
\[I_0 = \biggl \{ i \in I : \, \sum_{j \in F_i : \, a_j \geq \epsilon_1} a_j /|F_i| 
\geq \rho_i^{q-1} \biggr \}.\]
Suppose that \(I_0 \ne \emptyset \). Then, \(|G_i^0| / |F_i | \geq \rho_i^{q-1}\), for all
\(i \in I_0\) and so 
\[\rho_i \leq (|G_i^0| / |F_i |)^{\frac{1}{q-1}}, \, \forall \, i \in I_0.\]
Note also that
\[ \sum_{j \in F_i : \, a_j \geq \epsilon_1} a_j \leq (\max_{j \in F_i : \, a_j \geq \epsilon_1} a_j ) |G_i^0|  
\leq (1/\epsilon_1) a_{j_i^0} |G_i^0|, \, \forall \, i \in I.\]
We infer from the above that
\begin{align}
\sum_{i \in I_0} \rho_i \sum_{j \in F_i : \, a_j \geq \epsilon_1} a_j /|F_i| &\leq
\sum_{i \in I_0} (|G_i^0| / |F_i |)^{\frac{1}{q-1}} (1/\epsilon_1) a_{j_i^0} (|G_i^0| / |F_i|) \notag \\
&=(1/\epsilon_1) \sum_{i \in I_0}  a_{j_i^0} (|G_i^0| / |F_i |)^p . \notag
\end{align}
The preceding inequality holds trivially when \(I_0 = \emptyset\).
By combining the above relations we obtain
\begin{align}
\sum_{i \in I} \rho_i \sum_{j \in F_i : \, a_j \geq \epsilon_1} a_j /|F_i| &=
\sum_{i \in I_0} \rho_i \sum_{j \in F_i : \, a_j \geq \epsilon_1} a_j /|F_i| +
\sum_{i \in I \setminus I_0} \rho_i \sum_{j \in F_i : \, a_j \geq \epsilon_1} a_j /|F_i| \notag \\
&\leq (1/\epsilon_1) \sum_{i \in I_0}  a_{j_i^0} (|G_i^0| / |F_i |)^p +
\sum_{i \in I \setminus I_0} \rho_i \rho_i^{q-1} \notag \\
&\leq (1/\epsilon_1) \sum_{i \in I}  a_{j_i^0} (|G_i^0| / |F_i |)^p +
\sum_{i \in I} \rho_i^q \notag 
\end{align}
and so \eqref{E1} holds. 

We next prove \eqref{E2}. Fix \(n \in \mathbb{N}\) and set
\[ \rho_{ni} = \sum_{j \in F_i : \, \epsilon_{n+1} \leq a_j < \epsilon_n} a_j /|F_i|, \,
\forall \, i \in I.\]
Observe that
\begin{align}
\rho_{ni} &\leq (\max_{j \in F_i : \, \epsilon_{n+1} \leq a_j < \epsilon_n} a_j )(|G_i^n| / |F_i|)
\notag \\
&\leq \epsilon_n (|G_i^n| / |F_i|) \leq (\epsilon_n / \epsilon_{n+1}) a_{j_i^n}(|G_i^n| / |F_i|),
\forall \, i \in I. \notag 
\end{align}
Define \(I_n = \{i \in I : \, \rho_{ni} \geq \epsilon_n^{\beta} \rho_i^{q-1} \}\).
Suppose that \(I_n \ne \emptyset \) and let \(i \in I_n\).
Then, \(\epsilon_n^{\beta} \rho_i^{q-1} \leq \epsilon_n (|G_i^n| / |F_i|)\). Hence,
\[\rho_i \leq (|G_i^n| / |F_i |)^{\frac{1}{q-1}} 
\epsilon_n^{\frac{1- \beta }{q-1}},
 \, \forall \, i \in I_n.\]
We thus obtain from the preceding relations that
\begin{align}
\sum_{i \in I_n} \rho_i \rho_{ni} &\leq \sum_{i \in I_n} 
(|G_i^n| / |F_i |)^{\frac{1}{q-1}} 
\epsilon_n^{\frac{1- \beta }{q-1}}
(\epsilon_n / \epsilon_{n+1}) a_{j_i^n}(|G_i^n| / |F_i|) \notag \\
&= (\epsilon_n^{\frac{q- \beta}{q-1}} / \epsilon_{n+1}) \sum_{i \in I_n} 
a_{j_i^n}(|G_i^n| / |F_i|)^{1 + \frac{1}{q-1}} =
\epsilon_n^\gamma  \sum_{i \in I_n} 
a_{j_i^n}(|G_i^n| / |F_i|)^p , \notag
\end{align} 
where, in the above, we made use of the fact that \(\epsilon_{n + 1} = \epsilon_n^\alpha\), and
so 
\[\frac{q- \beta}{q-1} - \alpha = \frac{q}{q-1} - \frac{\beta}{q-1} - \alpha =
p -  \frac{\beta}{q-1} - \alpha = \gamma,\]
by the choice of \(\alpha\), \(\beta\), \(\gamma\).
Note also that the preceding estimate is still valid if \(I_n = \emptyset\).
We finally have the estimate
\begin{align}
\sum_{i \in I} \rho_i \sum_{j \in F_i : \, a_j < \epsilon_n} a_j /|F_i| &=
\sum_{i \in I_n} \rho_i \rho_{ni} + \sum_{i \in I \setminus I_n} \rho_i \rho_{ni} + 
\sum_{i \in I} \rho_i \sum_{j \in F_i : \, a_j < \epsilon_{n + 1}} a_j /|F_i| \notag \\
&\leq \sum_{i \in I} \rho_i \sum_{j \in F_i : \, a_j < \epsilon_{n + 1}} a_j /|F_i| +
\epsilon_n^\beta \sum_{i \in I \setminus I_n } \rho_i \rho_i^{q-1} \, + \notag \\
&+ \epsilon_n^\gamma  \sum_{i \in I_n} 
a_{j_i^n}(|G_i^n| / |F_i|)^p \notag \\
&\leq \sum_{i \in I} \rho_i \sum_{j \in F_i : \, a_j < \epsilon_{n + 1}} a_j /|F_i| + \notag \\
&+ \epsilon_n^\gamma \sum_{i \in I} a_{j_i^n} (|G_i^n|/|F_i|)^p + \epsilon_n^\beta \sum_{i \in I} \rho_i^q . \notag
\end{align}
This proves \eqref{E2}.
\end{proof}
\begin{Lem} \label{L5}
Let \(u = \sum_n a_n e_n \) be a finitely supported vector in \(E_p\), with \(\|u \| \leq 1\)
and \(a_n \geq 0\), for all \(n \in \mathbb{N}\). 
We define, for every \(i \in \mathbb{N}\),
\[\lambda_i^0(u) = \sum_{j \in F_i : \, a_j \geq \epsilon_1} a_j /|F_i|, \, \text{ and}, \,
\lambda_i^n(u) = \sum_{j \in F_i : \, a_j < \epsilon_n} a_j /|F_i|, \, \forall \, n \in \mathbb{N}.\]
Let \(I\) be a finite subset of \(\mathbb{N}\) and \((\rho_i)_{i \in I} \subset [0, \infty )\).
Then, the following estimates hold:
\begin{align}
\sum_{i \in I} \rho_i \lambda_i^0(u) &\leq (2/\epsilon_1)\|u\| + \sum_{i \in I} \rho_i^q \notag \\
\sum_{i \in I} \rho_i \lambda_i^n(u) &\leq  \sum_{i \in I} \rho_i \lambda_i^{n+1}(u) +
(n+2) \epsilon_n^\gamma \|u\| + \epsilon_n^\beta \sum_{i \in I} \rho_i^q, \, \forall \, n \in \mathbb{N}. \notag
\end{align}
\end{Lem}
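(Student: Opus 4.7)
The plan is to derive Lemma \ref{L5} as a direct consequence of combining the conclusions of Lemmas \ref{L4} and \ref{L3}: Lemma \ref{L4} already yields the estimates of the desired shape, except with an extra sum of the form $\sum_{i \in I} a_{j_i^n}(|G_i^n|/|F_i|)^p$ on the right-hand side, and Lemma \ref{L3} is designed precisely to bound such sums by a small multiple of $\|u\|$.

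First, I would observe that the sets $G_i^n$ defined in the statement of Lemma \ref{L5} coincide with those used in Lemma \ref{L4}, and that for each $i \in I$ with $G_i^n \neq \emptyset$ the entry $a_{j_i^n}$ satisfies $a_{j_i^n} \geq \epsilon_{n+1}$ (while $a_{j_i^0} \geq \epsilon_1$), by the very definition of $j_i^n = \max G_i^n$ and of $G_i^n$. This is exactly the hypothesis on coefficients needed to invoke Lemma \ref{L3}.

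Next, for the first inequality of Lemma \ref{L5} I apply inequality \eqref{E1} of Lemma \ref{L4}, and then apply Lemma \ref{L3} with parameter $n = 1$ to the family $(j_i^0)_{i \in I}$ (restricted to those $i$ for which $G_i^0 \neq \emptyset$). This produces the bound $\sum_{i \in I} a_{j_i^0}(|G_i^0|/|F_i|)^p \leq 2\|u\|$, which multiplied by $1/\epsilon_1$ gives exactly the $(2/\epsilon_1)\|u\|$ term. For the second inequality, fix $n \in \mathbb{N}$, apply inequality \eqref{E2} of Lemma \ref{L4}, and then invoke Lemma \ref{L3} with parameter $n+1$ for the family $(j_i^n)_{i \in I}$ (with $G_i^n \neq \emptyset$), yielding $\sum_{i \in I} a_{j_i^n}(|G_i^n|/|F_i|)^p \leq (n+2)\|u\|$. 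Multiplying by $\epsilon_n^\gamma$ produces the $(n+2)\epsilon_n^\gamma \|u\|$ term and completes the estimate.

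The proof is essentially pure bookkeeping; there is no substantive obstacle, since the arithmetic between $\epsilon_n$, $\epsilon_{n+1}$, $\alpha$, $\beta$, $\gamma$ was already absorbed inside Lemma \ref{L4}. The only point to be slightly careful about is that Lemma \ref{L3}'s parameter $n$ refers to the threshold $\epsilon_n$ that bounds the coefficients from below, which in our setting is $\epsilon_1$ for $G_i^0$ (hence parameter $1$, giving the factor $n+1 = 2$) and $\epsilon_{n+1}$ for $G_i^n$ (hence parameter $n+1$, giving the factor $(n+1)+1 = n+2$). Matching these indices correctly with the statement of Lemma \ref{L5} is the only place an off-by-one slip could creep in.
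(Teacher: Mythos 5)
Your proposal is correct and follows essentially the same route as the paper: the author likewise notes that $a_{j_i^n} \geq \epsilon_{n+1}$ for $n \in \mathbb{N} \cup \{0\}$, invokes Lemma \ref{L3} (with the shifted parameter) to get $\sum_{i \in I} a_{j_i^n}(|G_i^n|/|F_i|)^p \leq (n+2)\|u\|$, and then plugs this into \eqref{E1} and \eqref{E2}. Your attention to the off-by-one in the parameter of Lemma \ref{L3} is exactly the one point where care is needed, and you handle it correctly.
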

\begin{proof}
Adhering to the notation given in Lemma \ref{L4}, we deduce from Lemma \ref{L3}, that
\[\sum_{i \in I} a_{j_i^n} (|G_i^n|/|F_i|)^p \leq (n+2) \|u\|, \, \forall \, n \in \mathbb{N} \cup \{0\}, \]
as \(a_{j_i^n} \geq \epsilon_{n+1}\), for all \(n \in \mathbb{N} \cup \{0\}\). 
The desired estimates follow now from \eqref{E1} and \eqref{E2}, respectively.
\end{proof}
\begin{Cor} \label{C2}
Let \(B= 4( 1 + 2/\epsilon_1 + \sum_{n=1}^\infty \epsilon_n^\beta + 
\sum_{n=1}^\infty (n +2 )\epsilon_n^\gamma ) \). 
Then, for every \(I \subset \mathbb{N}\), finite, and all choices of scalars
\((\rho_i)_{i \in I}\) with \(\sum_{i \in I} |\rho_i|^q \leq 1\), we have that
\[\biggl \| \sum_{i \in I} \rho_i u_i^* \biggr \| \leq B .\]
\end{Cor}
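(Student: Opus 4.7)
The plan is to bound $\| \sum_{i \in I} \rho_i u_i^* \|_{E_p^*}$ by duality, evaluating against arbitrary finitely supported $u = \sum_n a_n e_n \in E_p$ with $\|u\| \leq 1$, and then pipelining the estimates from Lemma \ref{L5}.

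First, I would reduce to nonnegative entries. By suppression $1$-unconditionality of $(e_n)$, writing $a_n = a_n^+ - a_n^-$ gives two vectors $u^\pm = \sum_n a_n^\pm e_n$ with $\|u^\pm\| \leq \|u\| \leq 1$. Splitting $\rho_i = \rho_i^+ - \rho_i^-$ similarly, the identity $(\rho_i^+)^q + (\rho_i^-)^q = |\rho_i|^q$ (at most one summand is nonzero) yields $\sum_{i \in I}(\rho_i^s)^q \leq \sum_{i \in I}|\rho_i|^q \leq 1$ for $s \in \{+,-\}$. Consequently
$$\biggl| \biggl\langle \sum_{i \in I} \rho_i u_i^*, u \biggr\rangle \biggr| \leq \sum_{s,t \in \{+,-\}}\, \sum_{i \in I} \rho_i^s \sum_{j \in F_i} \frac{a_j^t}{|F_i|},$$
and it suffices to bound each of the four nonnegative summands by $B/4 = 1 + 2/\epsilon_1 + \sum_{n \geq 1}\epsilon_n^\beta + \sum_{n \geq 1}(n+2)\epsilon_n^\gamma$.

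Fixing one such summand, I would partition $\sum_{j \in F_i} a_j^t/|F_i| = \lambda_i^0(u^t) + \lambda_i^1(u^t)$ according to the threshold $\epsilon_1$. The first inequality of Lemma \ref{L5} directly gives $\sum_{i \in I}\rho_i^s \lambda_i^0(u^t) \leq 2/\epsilon_1 + 1$. For the tail, I would iterate the recursion
$$\sum_{i \in I}\rho_i^s \lambda_i^n(u^t) \leq \sum_{i \in I}\rho_i^s \lambda_i^{n+1}(u^t) + (n+2)\epsilon_n^\gamma + \epsilon_n^\beta$$
starting at $n = 1$. Because $u^t$ has finite support, $\lambda_i^N(u^t) = 0$ for every $i$ once $\epsilon_N$ drops below the smallest positive value of $a_j^t$; the telescope therefore terminates and produces $\sum_{i \in I}\rho_i^s \lambda_i^1(u^t) \leq \sum_{n \geq 1}(n+2)\epsilon_n^\gamma + \sum_{n \geq 1}\epsilon_n^\beta$. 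Adding the head and tail estimates yields the desired $B/4$ bound on each summand.

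No real obstacle arises: the two infinite series in $B$ are finite thanks to the elementary fact $\sum_n n\epsilon_n^\lambda < \infty$ for all $\lambda > 0$ recorded in the opening notation of this section, and all the substantive work has already been absorbed into Lemmas \ref{L3}--\ref{L5}. The only item requiring care is the bookkeeping for the four-term sign decomposition and the verification that the telescope terminates, both of which are routine.
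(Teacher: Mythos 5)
Your proposal is correct and follows essentially the same route as the paper: reduce by (suppression) unconditionality to nonnegative coefficients on both sides, split the pairing into the head term $\lambda_i^0$ and the tail $\lambda_i^1$, and iterate the two estimates of Lemma \ref{L5} until the telescope terminates by finite support, yielding the bound $B/4$ on each piece. The paper merely organizes the sign reduction as two successive factors of $2$ rather than your single four-term decomposition; the constants and all substantive steps agree.
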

\begin{proof}
Assume first that all the \(\rho_i\)'s are non-negative. 
Let \(u = \sum_n a_n e_n \) be a finitely supported vector in \(E_p\), with \(\|u \| \leq 1\)
and \(a_n \geq 0\), for all \(n \in \mathbb{N}\). 
Set \(\delta_n = \epsilon_n^\beta + (n +2 )\epsilon_n^\gamma  \), for all \(n \in \mathbb{N}\). 
We now have the following estimates
\begin{align}
\sum_{i \in I} \rho_i u_i^*(u) &= \sum_{i \in I} \rho_i \lambda_i^0(u) + 
\sum_{i \in I} \rho_i \lambda_i^1(u) \notag \\
&\leq 1 + (2/\epsilon_1) + \sum_{i \in I} \rho_i \lambda_i^2(u) + \delta_1, \notag
\end{align}
by applying Lemma \ref{L5}.
Recursive applications of Lemma \ref{L5} now yield that
\[\sum_{i \in I} \rho_i u_i^*(u) \leq 1 + (2/\epsilon_1) + \sum_{i \in I} \rho_i \lambda_i^{n+1}(u) 
+ \sum_{k=1}^n \delta_k, \, \forall \, n \in \mathbb{N}.\]
But since \(I\) is finite and \(\lim_n \epsilon_n = 0\), there exists some \(d \in \mathbb{N}\)
such that \(\lambda_i^{d+1}(u) = 0\) for all \(i \in I\). This in turn implies that
\[\sum_{i \in I} \rho_i u_i^*(u) \leq 1 + (2/\epsilon_1) +  \sum_{k=1}^d \delta_k \leq B /4.\]
Since \((e_n)\) is suppression \(1\)-unconditional, this gives us that
\[\biggl \| \sum_{i \in I} \rho_i u_i^* \biggr \| \leq B /2 .\]
The general case follows immediately from the preceding estimate. 
\end{proof}   
\begin{proof}[Proof of Theorem \ref{T3}.]
Lemma \ref{L2} and Corollary \ref{C2} yield positive constants \(A < B\)
satisfying
\[A \biggl (\sum_{i=1}^n |\rho_i|^q \biggr )^{1/q} \leq
\biggl \| \sum_{i=1}^n \rho_i u_i^* \biggr \| \leq B \biggl (\sum_{i=1}^n |\rho_i|^q \biggr )^{1/q} ,\]
for every \(n \in \mathbb{N}\) and all choices of scalars \((\rho_i)_{i=1}^n\).
Thus, \((u_n^*)\) is equivalent to the \(\ell_q\) basis.
\end{proof}
\begin{Cor} \label{C3}
\(\ell_p\) is isomorphic to a quotient of \(E_p\) and the map
\(Q \colon E_p \to \ell_p \) given by
\[Q \biggl ( \sum_n a_n e_n \biggr ) = \biggl ( \sum_{k \in F_n} a_k /|F_n| \biggr )_{n=1}^\infty \]
is a well-defined, bounded linear surjection.
\end{Cor}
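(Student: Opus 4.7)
The plan is to exploit the duality between $Q$ and the map $T\colon \ell_q \to E_p^*$ which sends the $n$-th standard unit vector of $\ell_q$ to $u_n^*$. The central observation is that for any finitely supported $x = \sum_k a_k e_k$ and any $n \in \mathbb{N}$,
\[
(Q(x))_n = \sum_{k \in F_n} \frac{a_k}{|F_n|} = u_n^*(x),
\]
so testing $Q(x)$ against a finitely supported scalar sequence $(\rho_n)$ gives $\sum_n \rho_n (Q(x))_n = \bigl(\sum_n \rho_n u_n^*\bigr)(x)$. The equivalence with the $\ell_q$ basis already established in Theorem \ref{T3} is thus precisely the statement that $T$ is an isomorphic embedding, and this will yield both the boundedness and the surjectivity of $Q$ at once via duality.

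For boundedness, I would first note that $Q$ is clearly well-defined on $c_{00}$. For $(\rho_n)$ supported in a finite set $I$ with $\sum_{n \in I} |\rho_n|^q \leq 1$, the identity above together with Corollary \ref{C2} yields
\[
\biggl| \sum_{n \in I} \rho_n (Q(x))_n \biggr| \leq \biggl\| \sum_{n \in I} \rho_n u_n^* \biggr\|_{E_p^*} \|x\|_{E_p} \leq B\, \|x\|_{E_p}.
\]
Taking the supremum over such $(\rho_n)$ and invoking the duality $\ell_p^* = \ell_q$ produces $\|Q(x)\|_{\ell_p} \leq B\, \|x\|_{E_p}$ for every $x \in c_{00}$. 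By density, $Q$ extends uniquely to a bounded linear map $E_p \to \ell_p$ with $\|Q\| \leq B$.

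For surjectivity, I would identify the adjoint $Q^*\colon \ell_q \to E_p^*$ explicitly. The identity above shows that $Q^* f_n = u_n^*$, where $f_n$ is the $n$-th coordinate functional of $\ell_p$; by linearity and continuity, $Q^*(\rho) = \sum_n \rho_n u_n^*$ for all $\rho \in \ell_q$. Theorem \ref{T3} then asserts that $Q^*$ is an isomorphism onto its range, hence bounded below by a positive constant $A$. Invoking the standard fact that a bounded linear operator between Banach spaces is surjective if and only if its adjoint is bounded below, we conclude that $Q$ is onto $\ell_p$, and the open mapping theorem identifies $\ell_p$ with the quotient $E_p/\ker Q$.

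The main work has already been absorbed into Theorem \ref{T3} and Corollary \ref{C2}; after the identification $Q^* f_n = u_n^*$ the argument is routine duality, and I do not anticipate any further technical obstacle.
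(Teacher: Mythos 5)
Your proposal is correct and follows essentially the same route as the paper: the paper likewise deduces boundedness of \(Q\) from Theorem \ref{T3}, identifies \(Q^*(z_n^*) = u_n^*\), and concludes surjectivity from the fact that \(Q^*\) is an isomorphic embedding. You merely spell out the duality computation (testing \(Q(x)\) against finitely supported \(\ell_q\)-vectors and invoking Corollary \ref{C2}) that the paper leaves implicit.
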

\begin{proof}
Theorem \ref{T3} yields that \(Q\) is a well-defined, bounded, linear operator.
Let \((z_n)\) be the unit vector basis of \(\ell_p\). Then, it is easy to see that
\(Q^*(z_n^*) = u_n^*\), for all \(n \in \mathbb{N}\).
Theorem \ref{T3} now implies that \(Q^*\) is an isomorphic embedding of \(\ell_p^*\) into
\(E_p^*\) and therefore, \(Q\) is a surjection.
\end{proof}
%\begin{Cor} \label{C4}
%If \( p > 1\) and \(\lim_n |F_n | = \infty\), then \(\ell_p\) is isomorphic to a quotient of
%\(E_p\).
%\end{Cor}
%\begin{proof}
%Choose a subsequence \((F_{k_n})\) of \((F_n)\) satisfying the conditions described in the beginning
%of this section. Let \(Z\) be the closed linear subspace of \(E_p\) spanned by
%\((e_m)_{m \in \cup_n F_{k_n}}\). Then, \(Z\) is complemented in \(E_p\) and \(\ell_p\) is isomorphic to
%a quotient of \(Z\). Hence, it is also isomorphic to a quotient of \(E_p\).
%\end{proof} 
\section{\(E_p\) does not embed in \(C(\alpha)\), \(\alpha < \omega_1\)} \label{S3}
In this section we give the proof of Theorem \ref{T2}. We then apply this result to 
show that for all \(p \geq 1\), \(E_p\) is not isomorphic to a subspace of a \(C(K)\) space
for any countable and compact metric space \(K\), provided that \(\lim_n |F_n| = \infty\).
We first introduce some notation.

{\bf Notation}. Let \((e_n)\) be a normalized basic sequence in some Banach space \(X\). Let 
\(x\) and \(y\) be blocks of \((e_n)\). 
We write \(y \preceq x\) if
\(|e_n^*(y)| \leq |e_n^*(x)|\) for all \(n  \in \mathbb{N}\).
In the case where \(X = C(K)\) and \((e_n)\) consists of non-negative functions on \(K\), we write
\(y \preceq_+ x \) if \(0 \leq e_n^*(y) \leq e_n^*(x)\) for all \(n \in \mathbb{N}\). Evidently,
\(0 \leq y \leq x\), pointwise on \(K\).

It will be more convenient for us to work with non-negative, continuous functions
on some compact metric space. In this setting, Theorem \ref{T2} is reformulated as follows:
\begin{Thm} \label{T4}
Let \(K\) be a compact metric space and \((f_n)\) be a normalized basic sequence in \(C(K)\)
consisting of non-negative functions on \(K\). Assume that there exists a semi-normalized weakly null
block basis \((g_n)\) of \((f_n)\) with \(f_n^*(g_n) \geq 0\) for all \(n \in \mathbb{N}\), satisfying the following
property: For every \(M \in [\mathbb{N}]\), every \(\alpha < \omega_1\) and every \(\epsilon > 0\),
there exists a block basis \((u_m)_{m \in M}\) of \((f_n)\) with finitely many non-zero terms so that
\begin{enumerate}
\item \(u_m \preceq_+ g_m, \, \forall \, m \in M\).
\item \(\| \sum_{m \in M} u_m \| = 1\).
\item \(\| \sum_{m \in F} u_m  \| < \epsilon \), for all \(F \subset M\) with \(F \in S_\alpha\).
\end{enumerate}
Then, \(K\) is uncountable.
\end{Thm}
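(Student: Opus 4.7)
The plan is to suppose, for contradiction, that $K$ is countable compact metric, and to extract a contradiction from the hypothesis. By the Mazurkiewicz--Sierpi\'nski theorem, such a $K$ is homeomorphic to a compact ordinal interval; in particular it has a Cantor--Bendixson index $\eta<\omega_1$, and $\mathrm{ext}(B_{C(K)^*})$ reduces to the countably many point evaluations $\pm\delta_x$, stratified by the derived sets $K^{(\xi)}$, $\xi\leq\eta$. Following the spirit of Fonf's arguments, the idea is to convert this Cantor--Bendixson stratification of $K$ into a Schreier-family control on the level sets $\{m:g_m(x)\geq\delta\}$, and then to use property (3) at a suitable ordinal $\alpha$ to contradict the norming condition (2).

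The technical heart would be the following Key Lemma: since $(g_m)$ is weakly null in $C(K)$ we have $g_m(x)\to 0$ pointwise on $K$, and for every $\delta>0$ there exist $\beta_\delta<\omega_1$ and $M_\delta\in[\mathbb{N}]$ with $\{m\in M_\delta:g_m(x)\geq\delta\}\in S_{\beta_\delta}$ for every $x\in K$. I would prove this by transfinite induction on $\eta$: the base case $K$ finite is immediate; at the successor stage one peels off the isolated points of $K\setminus K^{(1)}$ (whose local Cantor--Bendixson rank drops), applies the induction to the smaller-rank pieces, and amalgamates the local $S_{\beta_x}$-bounds into a single uniform $S_\beta$-bound using the spreading property of the Schreier hierarchy; the limit stage is handled by a diagonal argument along a cofinal sequence. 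A further diagonalisation across the countable family of thresholds $\delta=\epsilon^k$, $k\in\mathbb{N}$, then yields a single infinite $M$ and a single countable $\beta:=\sup_k\beta_{\epsilon^k}$ such that $\{m\in M:g_m(x)\geq\epsilon^k\}\in S_\beta$ simultaneously for every $k$ and every $x\in K$.

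Granted this uniform control, fix $\epsilon\in(0,1)$ and apply the hypothesis of Theorem~\ref{T4} with this $M$ and with $\alpha:=\beta$, producing $(u_m)_{m\in M}$ satisfying (1)--(3). Non-negativity of $(f_n)$ together with $u_m\preceq_+ g_m$ gives $0\leq u_m\leq g_m$ pointwise on $K$, so compactness and (2) furnish $x_0\in K$ with $\sum_m u_m(x_0)=1$. Since only finitely many $u_m$ are nonzero, for $k$ large enough the set $F^\#:=\{m\in M:u_m(x_0)\neq 0\}$ coincides with $\{m\in M:u_m(x_0)\geq\epsilon^k\}$, which is contained in $\{m\in M:g_m(x_0)\geq\epsilon^k\}\in S_\beta$. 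By hereditariness $F^\#\in S_\beta=S_\alpha$, so property (3) yields
\[
1=\sum_{m\in F^\#}u_m(x_0)\leq\Bigl\|\sum_{m\in F^\#}u_m\Bigr\|<\epsilon<1,
\]
the desired contradiction. The main obstacle is the Key Lemma, specifically the uniformity of the $S_\beta$-bound over all $x\in K$; this is exactly the place where the countability of $K$ is used decisively, via the Cantor--Bendixson stratification, while the tail of the argument is just the observation that an upper $S_\alpha$-estimate on the block basis forces the whole norming identity to collapse.
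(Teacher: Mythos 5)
There is a genuine gap, and it sits exactly at the step you flag as the ``technical heart.'' First, note that your Key Lemma for a \emph{single} threshold $\delta$ is true for \emph{every} compact metric $K$, countable or not: the family $\mathcal{G}_\delta=\{F\in[\mathbb{N}]^{<\infty}:\exists x\in K,\ g_m(x)\geq\delta\ \forall m\in F\}$ is hereditary and pointwise compact (if $F_n\to F$ with witnesses $x_n\to x$, continuity gives $g_m(x)\geq\delta$ for all $m\in F$, and pointwise nullity of $(g_m)$ forces $F$ finite), so the dichotomy theorem of \cite{g} already yields $M_\delta,\beta_\delta$ with $\mathcal{G}_\delta\cap[M_\delta]^{<\infty}\subset S_{\beta_\delta}$ --- no Cantor--Bendixson stratification and no countability of $K$ are involved. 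Consequently, if the rest of your argument were sound it would prove that \emph{no} compact metric $K$ supports a sequence satisfying the hypotheses, which is false: the paper itself (Corollary \ref{C5} together with the embedding of $E_p$ into $C(K)$ for the uncountable $w^*$-compact norming set $K\subset B_{E_p^*}$) exhibits exactly such a configuration. So the flaw must lie in the tail of your argument, and it does: the ``further diagonalisation across the countable family of thresholds'' cannot produce a single pair $(M,\beta)$ with $\{m\in M:g_m(x)\geq\epsilon^k\}\in S_\beta$ for all $x$ \emph{and all} $k$ simultaneously. Indeed $\bigcup_k\mathcal{G}_{\epsilon^k}$ is typically all of $[\mathbb{N}]^{<\infty}$ (in the paper's example, for any finite $F$ a single $p$-measure charges every $F_m$, $m\in F$, with some tiny positive mass), hence non-compact, so any diagonal $M$ leaves a $k$-dependent uncontrolled initial segment of $F^{\#}$. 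Since the $k$ you need is dictated by $\min\{u_m(x_0):u_m(x_0)>0\}$ --- a quantity produced \emph{after} you have committed to $M$ and $\alpha$, and which can be arbitrarily small --- the estimate $1\leq\|\sum_{m\in F^{\#}}u_m\|<\epsilon$ degenerates into $1\leq (k-1)\epsilon+\epsilon$, which is no contradiction.

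For contrast, the paper's proof uses a different mechanism in which countability enters in an essentially pointwise, not ordinal-combinatorial, way. It introduces $F$-chains: finite families $u_{m_i}\preceq_+ g_{m_i}$ whose partial sums at a single point $t$ grow geometrically by factors $(1+\epsilon_{m_{i_k}})$ starting from level $\rho$. Proposition \ref{P2} shows, by a greedy selection combined with the dichotomy applied to the family $\mathcal{F}_M$ of supports of chains, that your hypotheses force $\mathcal{F}_M$ to be non-compact for every $M$; then Lemma \ref{L7} extracts, by a compactness/diagonal argument, an infinite chain at a single limit point $t$; finally Lemma \ref{L6} derives the contradiction by enumerating a putatively countable $K=(t_n)$ and choosing $M$ so sparsely that the tail $\sum_{m\in M_k}g_m(t_j)$ is below $\rho\epsilon_{m_k}$ for all $j\leq k$ --- the limit point $t$ must equal some $t_d$, and the tail bound at $t_d$ caps the geometric growth. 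This enumeration of the points of $K$ is where countability is used decisively, and it has no analogue in your scheme.
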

Assuming that Theorem \ref{T4} is proved, we shall give the proof of Theorem \ref{T2}.
\begin{proof}[Proof of Theorem \ref{T2}.]
Let \(K\) be a compact metric space such that \(C(K)\) contains an isomorph of \(X\).
We show that \(K\) is uncountable. We first choose a normalized basic sequence \((h_n)\) in \(C(K)\),
equivalent to \((e_n)\), and let \(Y\) denote the closed subspace of \(C(K)\) spanned by \((h_n)\).
Let \(T \colon X \to Y\) be an isomorphism with \(Te_n = h_n\) for all \(n \in \mathbb{N}\).
Set \(c = \|T^{-1} \|\). Let \((x_n)\) be a semi-normalized block basis
of \((e_n)\) according to the hypothesis of the theorem, and set
\(y_n = c Tx_n\), for all \(n \in \mathbb{N}\). Then, \((y_n)\) is a semi-normalized block basis
of \((h_n)\). It is a routine to check that \((y_n)\) satisfies the same property, relatively to
\((h_n)\), that \((x_n)\) enjoys in the statement of the theorem.

We next set \(f_n = |h_n|\), for all \(n \in \mathbb{N}\). Clearly, \((f_n)\) is normalized 
weakly null in \(C(K)\). By passing to a subsequence if necessary, we may assume, without loss
of generality, that \((f_n)\) is basic. Write \(y_n = \sum_{i \in I_n} b_i h_i\), where
\(I_1 < I_2 < \dots \) is a sequence of successive finite subsets of \(\mathbb{N}\) and
\((b_i)_{i \in I_n}\) are scalars for all \(n \in \mathbb{N}\). Put \(g_n = \sum_{i \in I_n} |b_i|f_i\)
for all \(n \in \mathbb{N}\). The unconditionality of \((h_n)\) yields that \((g_n)\) is
semi-normalized. Indeed, letting \(C > 0\) denote the unconditional constant of \((h_n)\), fix some
\(t \in K\). For each \(n \in \mathbb{N}\), choose a collection of signs \((\sigma_i)_{i \in I_n}\)
so that \( g_n(t) = \sum_{i \in I_n} b_i \sigma_i h_i(t)\). Then,
\[g_n(t) \leq \biggl \| \sum_{i \in I_n } b_i \sigma_i h_i \biggr \| \leq C  
\biggl \| \sum_{i \in I_n } b_i h_i \biggr \| = C \|y_n\|.\] 
Hence, as \(t \in K\) was arbitrary,
\(\|y_n\| \leq \|g_n \| \leq C \|y_n\|\), for all \(n \in \mathbb{N}\). Since \((y_n)\) is
semi-normalized, so is \((g_n)\).

We also obtain that \((g_n)\) is weakly null. Indeed, let \(t \in K\) and write, as we did in the previous paragraph,
\( g_n(t) = \sum_{i \in I_n} b_i \sigma_i h_i(t)\) for all \(n \in \mathbb{N}\). 
Set \(r_n = \sum_{i \in I_n } b_i \sigma_i h_i \) for all \(n \in \mathbb{N}\). Then, \((r_n)\)
is a semi-normalized block basis of \((h_n)\). Since the latter is shrinking, \((r_n)\) is weakly null
and so we get that
\(\lim_n r_n(t) = 0\). Thus, \(\lim_n g_n(t) = 0\) as well. The assertion follows since \(t \in K\) was
arbitrary and \((g_n)\) is semi-normalized.

We finally show that \((g_n)\) satisfies the property described in the statement of Theorem \ref{T4}.
To this end, let \(M \in [\mathbb{N}]\), \(\alpha < \omega_1\) and \(\epsilon > 0\). We may choose 
a block basis \((v_m)_{m \in M}\) of \((h_n)\) with finitely many non-zero terms so that
\begin{align}
&v_m \preceq y_m, \, \forall \, m \in M, \text{ and }  \notag \\
&\biggl \| \sum_{m \in M} v_m \biggr \| =1, \text{ while } \biggl \| \sum_{m \in F} v_m \biggr \| < \epsilon /C,
\, \forall \, F \subset M, \, F \in S_\alpha. \label{E3}
\end{align}
Write \(v_m = \sum_{i \in I_m} \lambda_i h_i \), where \(|\lambda_i | \leq |b_i|\)
for all \(i \in I_m\) and \(m \in M\). Define \(w_m = \sum_{i \in I_m} |\lambda_i| f_i\).
It is clear that \(w_m \preceq_+ g_m\), for all \(m \in M\).
\eqref{E3} now implies that
\[\biggl \| \sum_{m \in M} w_m \biggr \| \geq \biggl \| \sum_{m \in M} v_m \biggr \| =1.\]
Using an argument similar to that in the previous paragraph, based on the unconditionality of \((h_n)\),
we obtain that
\[\biggl \| \sum_{m \in F} w_m \biggr \| \leq C \biggl \| \sum_{m \in F} v_m \biggr \|, \,
\forall \, F \subset M.\]    
Hence, we deduce from \eqref{E3}, that
\(\| \sum_{m \in F} w_m \| < \epsilon\), for all \(F \subset M\) with \(F \in S_\alpha\).

Setting \(u_m = w_m /\|\sum_{m \in M} w_m \|\), for all \(m \in M\), we see that
\((u_m)\) satisfies conditions \((1)\)-\((3)\) in the hypothesis of Theorem \ref{T4}.
Therefore, \(K\) is uncountable.
\end{proof}
The proof of Theorem \ref{T4} requires some preparatory steps.
\begin{Lem} \label{L6}
Let \(K\) be a set, \(\rho > 0 \) and \((\epsilon_n)\) a sequence of positive scalars.
Let \((g_n)\) be a sequence of non-negative functions on \(K\), converging pointwise to
the zero function on \(K\). Assume that for every \(M \in [\mathbb{N}]\), \(M = (m_n)\),
there exist \(t \in K\), \(I \in [\mathbb{N}]\), \(I=(i_n)\), and a sequence \((u_{m_n})\)
of non-negative functions on \(K\), satisfying the following:
\begin{enumerate}
\item \(u_{m_n} \leq g_{m_n}\), for all \(n \in \mathbb{N}\).
\item \(\sum_{i=1}^{i_1} u_{m_i}(t) \geq \rho\).
\item \((1 + \epsilon_{m_{i_n}}) \sum_{i=1}^{i_n} u_{m_i}(t) \leq
 \sum_{i=1}^{i_{n+1}} u_{m_i}(t) \), for all \(n \in \mathbb{N}\).
\end{enumerate}
Then, \(K\) is uncountable.
\end{Lem}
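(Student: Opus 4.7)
Proceed by contradiction. Suppose $K$ is countable, enumerate $K=\{t_j\}_{j\geq 1}$, and aim to construct $M\in[\mathbb{N}]$ violating the hypothesis: for no $t\in K$, no $I$, and no $(u_{m_n})$ satisfying $(1)$ can both $(2)$ and $(3)$ hold.  Since $0\leq u_{m_n}\leq g_{m_n}$, condition $(2)$ forces $\sum_{n\in M}g_n(t)\geq\sum_{i=1}^{i_1}u_{m_i}(t)\geq\rho$, so it suffices to produce $M$ with $\sum_{n\in M}g_n(t_j)<\rho$ for every $j$; this kills $(2)$ at every $t_j\in K$ and defeats the hypothesis.

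To build such $M=\{m_k\}$, I would use the pointwise convergence $g_n(t)\to 0$ inductively.  Given $m_1<\cdots<m_{k-1}$, pick $m_k>m_{k-1}$ with $g_{m_k}(t_j)<\rho\cdot 2^{-(k+j+1)}$ for all $j\leq k$; these are only finitely many conditions, each satisfied for all sufficiently large $m_k$.  This controls the ``tail'' contribution $\sum_{k\geq j}g_{m_k}(t_j)<\rho/2^{j}$ uniformly in $j$.

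The main obstacle is the ``head'' $\sum_{k<j}g_{m_k}(t_j)$, which is not governed by the naive induction since $t_j$ was not yet in view when $m_k$ was chosen.  To handle it, I would interlace the diagonal with iterated applications of the hypothesis: construct a nested chain of infinite sets $M_0\supset M_1\supset\cdots$ where one applies the hypothesis to $M_{k-1}$ to extract a witness $t^{(k)}\in K$ and then thins to $M_k$ so that $\sum_{n\in M_k}g_n(t^{(i)})<\rho$ for all $i\leq k$.  Condition $(2)$ then forces $t^{(k+1)}\notin\{t^{(1)},\ldots,t^{(k)}\}$, so the witnesses are pairwise distinct.  Re-ordering the enumeration $\{t_j\}$ to begin with $t^{(1)},t^{(2)},\ldots$ and picking the diagonal $m_k^*\in M_k$ with sufficient growth, I would aim to establish $\sum_{n\in M^*}g_n(t^{(j)})<\rho$ for every witness $t^{(j)}$, so that the $t\in K$ returned by the hypothesis applied to $M^*$ violates $(2)$, yielding the contradiction.

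The hardest step will be making the head and tail controls compatible on the same $M^*$ at every witness $t^{(j)}$, since the head involves values at $t^{(j)}$ of functions chosen before $t^{(j)}$ was even identified.  I expect condition $(3)$, which implies $\sum_n\epsilon_{m_{i_n}}\leq\log\bigl(\sum_{n\in M}g_n(t)/\rho\bigr)$ at any admissible witness $t$ (because $\rho\prod_n(1+\epsilon_{m_{i_n}})\leq\sum_{n\in M}g_n(t)$), to play a decisive role by further restricting the admissible $(t,I)$-pairs and ultimately forcing the diagonal construction to succeed.
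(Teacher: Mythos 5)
There is a genuine gap. Your primary plan --- produce \(M\) with \(\sum_{n\in M}g_n(t_j)<\rho\) for \emph{every} \(j\), thereby killing condition (2) outright --- is unachievable in general, and you cannot repair it by interlacing. Concretely, take \(K=\mathbb{N}\) and \(g_n=\rho\,\chi_{[n,\infty)}\): this sequence is pointwise null, yet for every infinite \(M\) and every \(t\geq\min M\) one has \(\sum_{n\in M}g_n(t)\geq\rho\). So the ``head'' terms you worry about are not a technical nuisance but a genuine obstruction, and no choice of \(M\) defeats (2) alone; condition (3) must enter essentially. Your proposed use of (3), namely the lower bound \(\rho\prod_n(1+\epsilon_{m_{i_n}})\leq\sum_{n\in M}g_n(t)\), points in the wrong direction: since the \(\epsilon_n\) are merely positive (in the application they even tend to \(0\)), the product can converge and no contradiction results. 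The iterated-witness scheme does show the witnesses \(t^{(k)}\) are pairwise distinct, but distinctness by itself leads nowhere, and you acknowledge that the compatibility of head and tail controls on the diagonal remains unresolved. As written, the proposal is a plan with its decisive step missing.

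The missing idea is that one should control only \emph{tails}, and by the much smaller quantity \(\rho\epsilon_{m_k}\) rather than \(\rho\): choose nested infinite sets \(M_1\supset M_2\supset\cdots\) with \(m_{k+1}=\min M_k\) and \(\sum_{m\in M_k}g_m(t_j)<\rho\epsilon_{m_k}\) for all \(j\leq k\). This is always possible because by the time \(t_j\) enters the construction (at stage \(j\)) only tails remain to be chosen, so the uncontrollable head never appears. Now apply the hypothesis to \(M=(m_k)\) and suppose the witness is \(t=t_d\). Since \(d\leq i_d\) and \(m_i\in M_{i_d}\) for all \(i>i_d\), the single increment \(\sum_{i=i_d+1}^{i_{d+1}}u_{m_i}(t_d)\leq\sum_{m\in M_{i_d}}g_m(t_d)<\rho\epsilon_{m_{i_d}}\). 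Feeding this into condition (3) at the one index \(n=d\) gives \(\epsilon_{m_{i_d}}\sum_{i=1}^{i_d}u_{m_i}(t_d)<\rho\epsilon_{m_{i_d}}\), hence \(\sum_{i=1}^{i_1}u_{m_i}(t_d)\leq\sum_{i=1}^{i_d}u_{m_i}(t_d)<\rho\), contradicting (2). Thus (3) is not a global summability constraint but a local lever: it converts a tiny tail increment at the witness's own index into a bound on the head, which is exactly what defeats (2).
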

\begin{proof}
Suppose to the contrary, that \(K\) is countable and let
\((t_n)\) be an enumeration of the elements of \(K\). Choose \(m_1 \in \mathbb{N}\)
arbitrarily. Using the fact that \((g_n)\) is pointwise null on \(K\), we can inductively
choose a nested sequence \(M_1 \supset M_2 \supset \dots \) of infinite subsets of \(\mathbb{N}\)
so that letting \(m_{k+1} = \min M_k \), for all \(k \in \mathbb{N}\), the following hold:
\begin{align}
m_k &< m_{k+1}, \, \forall \, k \in \mathbb{N}, \text{ and}, \notag \\
\sum_{m \in M_k} g_m(t_j) &< \rho \epsilon_{m_k}, \, \forall \, j \leq k,
\, \forall \, k \in \mathbb{N}. \label{E4}
\end{align}
Set \(M = (m_k)\). Our assumptions yield a sequence \((u_{m_k})\) of non-negative functions
on \(K\), an infinite sequence of positive integers \((i_k)\), and a \(t \in K\), satisfying
\((1)\)-\((3)\). Suppose \(t = t_d\) for some \(d \in \mathbb{N}\). We infer from \((3)\) that
\[ (1 + \epsilon_{m_{i_d}}) \sum_{i=1}^{i_d} u_{m_i}(t_d) \leq
 \sum_{i=1}^{i_{d+1}} u_{m_i}(t_d). \] 
Note that \(m_{i_{d+1}} \geq m_{i_d + 1} = \min M_{i_d}\), and so
\((m_i)_{i=i_d + 1}^{i_{d+1}} \subset M_{i_d}\). It follows now, as
\(u_{m_j} \leq g_{m_j}\) for all \(j \in \mathbb{N}\), that
\[\sum_{i = i_d + 1}^{i_{d+1}} u_{m_i}(t_d) \leq \sum_{i = i_d + 1}^{i_{d+1}} g_{m_i}(t_d)
\leq \sum_{m \in M_{i_d}} g_m(t_d).\]
\eqref{E4} now implies that 
\(\sum_{i = i_d + 1}^{i_{d+1}} u_{m_i}(t_d) < \rho \epsilon_{m_{i_d}}\), as
\(d \leq i_d\). We deduce from the above that
\[ (1 + \epsilon_{m_{i_d}}) \sum_{i=1}^{i_d} u_{m_i}(t_d) < 
\sum_{i=1}^{i_d} u_{m_i}(t_d) + \rho \epsilon_{m_{i_d}}, \] 
whence,
\[ \sum_{i=1}^{i_1} u_{m_i}(t_d) \leq  
\sum_{i=1}^{i_d} u_{m_i}(t_d) < \rho,\]
contradicting \((2)\). Thus, \(K\) is uncountable.
\end{proof}
We shall next attempt to localize the content of the previous lemma, in terms
of countable ordinals.
Fix a compact metric space \(K\), a normalized basic sequence \((f_n)\) in \(C(K)\),
consisting of non-negative functions on \(K\),
and a semi-normalized block basis \((g_n)\) of \((f_n)\) with \(f_n^*(g_n) \geq 0\)
for all \(n \in \mathbb{N}\). Assume that \((g_n)\) is weakly null. 
We also fix \(\rho \in (0,1)\) and a null sequence of positive scalars \((\epsilon_n)\).
\begin{Def}
Let \(M \in [\mathbb{N}]\), \(M= (m_n)\), and \(F = \{m_{i_1}, < \dots < m_{i_n}\}\),
be a finite subset of \(M\).
An \(F\)-chain supported by \(M\), is a finite sequence \((u_{m_i})_{i=1}^{i_n} \subset C(K)\)
with \(u_{m_i} \preceq_+ g_{m_i}\), for all \(i \leq i_n\), for which 
there is some \(t \in K\) satisfying the following:
\begin{enumerate}
\item \(\sum_{i=1}^{i_1} u_{m_i}(t) \geq \rho\).
\item \((1 + \epsilon_{m_{i_k}}) \sum_{i=1}^{i_k} u_{m_i}(t) \leq
 \sum_{i=1}^{i_{k+1}} u_{m_i}(t) \), for all \( k < n\).
\end{enumerate}
\end{Def}
Given \(M \in [\mathbb{N}]\), we set
\[\mathcal{F}_M = \{ F \in [M]^{< \infty}: \, \text{ there exists an } F-\text{ chain supported by } M \} 
\cup \{\emptyset\}.\]
Evidently, \(\mathcal{F}_M\) is a hereditary family of finite subsets of \(M\). 
If \(\rho \leq \inf_n \|g_n\|\), then it is easy to see that \(\mathcal{F}_M\)
contains the singletons of \(M\). We also have that
\(\mathcal{F}_L \subset \mathcal{F}_M\), for all \(L \in [M]\).
\begin{Lem} \label{L7}
Assume that for every \(M \in [\mathbb{N}]\),
\(\mathcal{F}_M\) is not compact in the topology of pointwise convergence.
Then, \(K\) is uncountable.
\end{Lem}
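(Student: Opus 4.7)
The plan is to translate the non-compactness hypothesis into the hypotheses of Lemma~\ref{L6}, and then quote that lemma. Fix $M \in [\mathbb{N}]$, $M = (m_n)$. The first key observation is that $\mathcal{F}_M$ is a hereditary family of finite subsets of $M$, so viewed in $\{0,1\}^{\mathbb{N}}$ its failure to be compact (equivalently, closed) produces a pointwise limit of members of $\mathcal{F}_M$ which is an infinite set $L \subset M$; heredity then forces every finite subset of $L$ to lie in $\mathcal{F}_M$. Write $L = \{l_1 < l_2 < \dots\}$ with $l_k = m_{i_k}$, so $(i_k)$ is the increasing enumeration of the positions of $L$ in $M$.

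For each $n \in \mathbb{N}$, since $\{l_1, \dots, l_n\} \in \mathcal{F}_M$, fix an $\{l_1, \dots, l_n\}$-chain supported by $M$: functions $(u_{m_j}^n)_{j=1}^{i_n}$ with $u_{m_j}^n \preceq_+ g_{m_j}$ for all $j \leq i_n$, and a witness point $t^n \in K$ satisfying $\sum_{j=1}^{i_1} u_{m_j}^n(t^n) \geq \rho$ and the chain growth $(1+\epsilon_{m_{i_k}}) \sum_{j=1}^{i_k} u_{m_j}^n(t^n) \leq \sum_{j=1}^{i_{k+1}} u_{m_j}^n(t^n)$ for all $k < n$. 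The next step is a diagonal compactness extraction. For each fixed $j$, the collection $\{u : u \preceq_+ g_{m_j}\}$ lies in the finite-dimensional block subspace spanned by $\{f_i : i \in \mathrm{supp}\,g_{m_j}\}$ and is bounded there, hence is norm-compact in $C(K)$. Since $K$ is a compact metric space, a standard diagonal argument yields a subsequence along which $t^n \to t \in K$ and, for every $j$, $u_{m_j}^n \to u_{m_j}$ uniformly, with $u_{m_j} \preceq_+ g_{m_j}$.

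Uniform convergence of $u_{m_j}^n$ together with $t^n \to t$ gives $u_{m_j}^n(t^n) \to u_{m_j}(t)$, so passing to the limit in the two chain inequalities above preserves them: $\sum_{j=1}^{i_1} u_{m_j}(t) \geq \rho$, and $(1+\epsilon_{m_{i_k}}) \sum_{j=1}^{i_k} u_{m_j}(t) \leq \sum_{j=1}^{i_{k+1}} u_{m_j}(t)$ for every $k \in \mathbb{N}$. These are precisely conditions (1)--(3) of Lemma~\ref{L6} (with $I = (i_k)$). The standing hypothesis of Lemma~\ref{L6} that $(g_n)$ be pointwise null on $K$ is automatic: $(g_n)$ is weakly null in $C(K)$ and every point evaluation is a continuous linear functional. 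Since the construction works for every $M \in [\mathbb{N}]$, Lemma~\ref{L6} applies and yields that $K$ is uncountable.

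The only delicate point is the diagonal extraction in paragraph two: one must be careful that the chain inequalities, which involve varying upper limits $i_n$ in the summation, survive after passing to subsequences. This is handled by the observation that for each fixed $k$ the inequality at index $k$ is present for all sufficiently large $n$ (namely $n \geq k+1$), so we may take the limit along the diagonal subsequence and obtain the same inequality in the limit. Everything else is a routine compactness and heredity argument.
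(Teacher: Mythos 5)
Your proof is correct and takes essentially the same route as the paper's: extract an infinite set all of whose finite subsets lie in \(\mathcal{F}_M\), choose a chain witnessing each initial segment, diagonalize using the norm-compactness forced by \(u \preceq_+ g_{m_j}\) (finite-dimensional block) together with the compactness of \(K\), pass to the limit in the chain inequalities, and invoke Lemma \ref{L6} via the fact that weakly null implies pointwise null. The only cosmetic difference is that you derive the infinite increasing chain in \(\mathcal{F}_M\) from non-closedness plus heredity explicitly, which is exactly what the paper's opening sentence asserts without elaboration.
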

\begin{proof}
Let \(M \in [\mathbb{N}]\), \(M= (m_n)\). Since \(\mathcal{F}_M\) is hereditary and not pointwise
compact, there exists an infinite sequence of positive integers \((i_n)\), so that
\(\{m_{i_1}, \dots , m_{i_n} \} \in \mathcal{F}_M\), for all \(n \in \mathbb{N}\).
Set \(M_n = \{m \in M : m \leq m_{i_n} \}\), for all \(n \in \mathbb{N}\). We can select,
for each \(n \in \mathbb{N}\), a sequence \((u_m^n)_{m \in M_n} \subset C(K)\)
with \(u_m^n \preceq_+ g_m\), for all \(m \in M_n\), and a \(t_n \in K\),
so that
\begin{align}
&\sum_{m \in M_1} u_m^n (t_n) \geq \rho, \text{ and}, \notag \\
&(1 + \epsilon_{m_{i_k}}) \sum_{m \in M_k} u_m^n (t_n) \leq \sum_{m \in M_{k+1}} u_m^n (t_n),
\, \forall \, k < n. \notag 
\end{align}
Let \(m \in M\). Since \(u_m^n \preceq_+ g_m\), for all \(n \in \mathbb{N}\) with
\(m \in M_n\), we have that \((u_m^n)_{n : \, m \in M_n}\) is a bounded sequence in the finite dimensional
subspace of \(C(K)\) spanned by \((f_i)_{i \in \mathrm{ supp } \, g_m }\)
(the support of \(g_m\) is measured with respect to \((f_n)\)).
We can thus inductively
select a nested sequence \(J_1 \supset J_2 \supset \dots\) of infinite subsets of \(\mathbb{N}\)
so that, for all \(k \in \mathbb{N}\), the sequence \((u_{m_k}^n)_{n \in J_k}\) converges
uniformly on \(K\) to a function \(u_{m_k} \preceq_+ g_{m_k}\).

We next choose integers \(j_1 < j_2 < \dots \) with \(j_k \in J_k\) for all \(k \in \mathbb{N}\).
We may assume, without loss of generality thanks to the compactness of \(K\), that
\(\lim_n t_{j_n} = t \in K\). It follows that \(\lim_n u_m^{j_n} = u_m\), uniformly on \(K\),
for all \(m \in M\). We deduce from the choices made above, that
\begin{align}
&\sum_{m \in M_1} u_m^{j_n}(t_{j_n}) \geq \rho, \text{ and}, \label{E5} \\
&(1 + \epsilon_{m_{i_k}}) \sum_{m \in M_k} u_m^{j_n} (t_{j_n}) \leq \sum_{m \in M_{k+1}} u_m^{j_n} (t_{j_n}),
\, \forall \, k < n. \label{E6} 
\end{align}
Finally, let \(n \to \infty\) in \eqref{E5}, while fix \(k \in \mathbb{N}\) and let \(n \to \infty\) in
\eqref{E6}, to conclude that
\begin{align}
&\sum_{m \in M_1} u_m(t) \geq \rho, \text{ and}, \notag \\
&(1 + \epsilon_{m_{i_k}}) \sum_{m \in M_k} u_m(t) \leq \sum_{m \in M_{k+1}} u_m(t),
\, \forall \, k \in \mathbb{N}. \notag 
\end{align}
Since \((g_n)\) is weakly null, we see that the hypotheses of Lemma \ref{L6} are fulfilled and hence
\(K\) is uncountable.
\end{proof}
The key step in proving Theorem \ref{T4} is contained in our next proposition 
\begin{Prop} \label{P2}
Assume that for every \( M \in [\mathbb{N}]\), \(M = (m_n)\), every \(\alpha < \omega_1\)
and every \(\epsilon > 0\), there exist \(n \in \mathbb{N}\) and \((u_{m_i})_{i=1}^n \subset C(K)\)
with \(u_{m_i} \preceq_+ g_{m_i}\) for all \(i \leq n\), so that
\[\biggl \| \sum_{i=1}^n u_{m_i} \biggr \| = 1, \text{ yet}, \, 
\biggl \| \sum_{m \in F} u_m \biggr \| < \epsilon, \, \forall \,
F \subset \{m_1, \dots m_n \}, \, F \in S_\alpha.\]
Then, \(\mathcal{F}_M\) is pointwise compact for no \(M \in [\mathbb{N}]\).
\end{Prop}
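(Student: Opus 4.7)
The plan is to argue by contradiction. Suppose $M \in [\mathbb{N}]$ is such that $\mathcal{F}_M$ is pointwise compact. Since $\mathcal{F}_M$ is also hereditary, a standard ordinal-index reduction for hereditary pointwise compact families of finite subsets of $\mathbb{N}$ yields $L_0 \in [M]$ and a countable ordinal $\alpha$ with $\mathcal{F}_M \cap [L_0]^{<\infty} \subset S_\alpha$. Using that $(\epsilon_n)$ is null, I would further refine to $L = (\ell_n) \in [L_0]$ with $\sum_{\ell \in L} \epsilon_\ell < \eta$ for some fixed $\eta \in (0, 1-\rho)$; by heredity, $\mathcal{F}_L \subset S_\alpha$ still holds.

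Next I apply the hypothesis of the proposition to $L$, the ordinal $\alpha$, and any $\epsilon \in (0, 1-\rho-\eta)$, obtaining $n \in \mathbb{N}$ and $(u_{\ell_i})_{i=1}^n$ with $u_{\ell_i} \preceq_+ g_{\ell_i}$, $\|\sum_{i=1}^n u_{\ell_i}\|=1$, and $\|\sum_{m \in F} u_m\| < \epsilon$ for every $F \subset \{\ell_1, \dots, \ell_n\}$ belonging to $S_\alpha$. Compactness of $K$ together with non-negativity of the $u_{\ell_i}$'s produces $t \in K$ with $\sum_{i=1}^n u_{\ell_i}(t)=1$. Writing $v_j = \sum_{i=1}^j u_{\ell_i}(t)$, I then build an $F$-chain greedily: let $i_1$ be the smallest index with $v_{i_1} \geq \rho$, and inductively let $i_{k+1}$ be the smallest index with $v_{i_{k+1}} \geq (1+\epsilon_{\ell_{i_k}}) v_{i_k}$, terminating at step $r$ when no further such extension exists (this must happen since $v_n=1$). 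Setting $F = \{\ell_{i_1}, \dots, \ell_{i_r}\}$, the tuple $(u_{\ell_i})_{i=1}^{i_r}$ is by construction an $F$-chain supported by $L$ with witness $t$, so $F \in \mathcal{F}_L \subset S_\alpha$, and therefore $\|\sum_{m \in F} u_m\| < \epsilon$.

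The contradiction comes from a direct telescoping estimate on the mass outside $F$. Minimality of $i_1$ gives $\sum_{i=1}^{i_1-1} u_{\ell_i}(t) < \rho$; minimality of $i_{k+1}$ gives $\sum_{i=i_k+1}^{i_{k+1}-1} u_{\ell_i}(t) < \epsilon_{\ell_{i_k}} v_{i_k} \leq \epsilon_{\ell_{i_k}}$; and termination yields $\sum_{i=i_r+1}^n u_{\ell_i}(t) < \epsilon_{\ell_{i_r}}$. Summing over all omitted ranges and using $\sum_{k=1}^r \epsilon_{\ell_{i_k}} \leq \sum_{\ell \in L} \epsilon_\ell < \eta$, the total mass outside $F$ is strictly less than $\rho + \eta$, so $\sum_{m \in F} u_m(t) > 1 - \rho - \eta > \epsilon$. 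Hence $\|\sum_{m \in F} u_m\| > \epsilon$, contradicting the Schreier-smallness estimate above.

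The main obstacle is justifying the initial Schreier-dominance reduction converting ``$\mathcal{F}_M$ hereditary and pointwise compact'' into ``$\mathcal{F}_M \cap [L_0]^{<\infty} \subset S_\alpha$ for some $\alpha < \omega_1$''. Once this ingredient is in hand, the greedy construction of the chain and the telescoping estimate on the missed mass are essentially routine book-keeping that exploit the multiplicative $(1+\epsilon_{\ell_{i_k}})$-growth together with the summability $\sum_{\ell \in L} \epsilon_\ell < \eta$ arranged at the outset.
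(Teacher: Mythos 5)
Your proposal is correct and follows essentially the same route as the paper: the same reduction of a hereditary, pointwise compact $\mathcal{F}_M$ to $\mathcal{F}_M\cap[L_0]^{<\infty}\subset S_\alpha$ (which the paper obtains from Mazurkiewicz--Sierpinski together with the dichotomy theorem of \cite{g}), the same greedy construction of a chain $F\in\mathcal{F}_L\subset S_\alpha$ at a point $t$ where the sum attains its norm, and the same contradiction between the Schreier-smallness of $F$ and the fact that the chain must capture most of the unit mass at $t$. The only difference is cosmetic bookkeeping: you bound the mass missed by the chain additively, after refining to $\sum_{\ell\in L}\epsilon_\ell<\eta<1-\rho$, whereas the paper telescopes multiplicatively after arranging $\rho\prod_i(1+\epsilon_{m_i})<1$; these are interchangeable.
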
 
\begin{proof}
Let us suppose that for some \(L \in [\mathbb{N}]\), \(\mathcal{F}_L\)
is pointwise compact. By a classical result \cite{ms},
\(\mathcal{F}_L\) will then be homeomorphic to some ordinal interval
\([1, \beta]\), with \(\beta < \omega_1\). The dichotomy result of \cite{g}
now yields \(M \in [L]\) and \(\alpha < \omega_1\) so  
that \(\mathcal{F}_L \cap [M]^{< \infty} \subset S_\alpha\). Therefore,
\(\mathcal{F}_M \subset S_\alpha\), for some \(M \in [L]\) and \(\alpha < \omega_1\).

Since \(\lim_n \epsilon_n = 0\) and \(0 < \rho < 1\), we can assume, without loss of generality, that
\[\rho \prod_{i=1}^\infty (1 + \epsilon_{m_i}) < 1, \text{ where}, \, M =(m_i).\]
We next choose \(0 < \epsilon < \rho \) with
\((\rho + \epsilon ) \prod_{i=1}^\infty (1 + \epsilon_{m_i}) < 1\).
Our assumptions yield \(n \in \mathbb{N}\) and \((u_{m_i})_{i=1}^n \subset C(K)\)  
with \(u_{m_i} \preceq_+ g_{m_i}\) for all \(i \leq n\), so that
\[\biggl \| \sum_{i=1}^n u_{m_i} \biggr \| = 1, \text{ yet}, \, 
\biggl \| \sum_{m \in F} u_m \biggr \| < \epsilon, \, \forall \,
F \subset \{m_1, \dots m_n \}, \, F \in S_\alpha.\]
Choose \(t \in K\) with \(\sum_{i=1}^n u_{m_i}(t) = 1\).
Note that \(u_{m_i}(t) \leq \|u_{m_i}\| < \epsilon \) for all \(i \leq n\),
as \(S_\alpha\) contains all singletons.

Let \(i_1\) be the smallest integer in \(\{1, \dots , n\}\) 
satisfying \(\sum_{i=1}^{i_1} u_{m_i}(t) \geq \rho\).
Clearly, \(i_1 > 1\) as \(u_{m_1}(t) < \epsilon < \rho\).
We also have that \(i_1 < n\) as
\(\sum_{i=1}^{n-1} u_{m_i} (t) \geq 1 - \epsilon > \rho\).

We next let \(i_2\) be the smallest integer in \(\{i_1 + 1, \dots , n\}\)
satisfying

\((1 + \epsilon_{m_{i_1}}) \sum_{i=1}^{i_1} u_{m_i}(t) \leq \sum_{i=1}^{i_2} u_{m_i}(t).\)
Note that \(i_2\) exists since
\begin{align}
(1 + \epsilon_{m_{i_1}}) \sum_{i=1}^{i_1} u_{m_i}(t) &=
(1 + \epsilon_{m_{i_1}}) \biggl ( \sum_{i=1}^{i_1 -1 } u_{m_i}(t) +  u_{m_{i_1}}(t) \biggr ) \notag \\
&< (1 + \epsilon_{m_{i_1}}) (\rho + \epsilon) 
< 1, \notag 
\end{align} 
by the minimality of \(i_1\) and since 
\(\|u_{m_{i_1}} \| < \epsilon\).
  
We continue in this manner selecting integers \(i_2 < \dots < i_k \leq n\) so that
for each \(l \in \{2, \dots k\}\), \(i_l\) is the smallest integer in \(\{i_{l-1} + 1, \dots n \}\)
satisfying
\[(1 + \epsilon_{m_{i_{l-1}}}) \sum_{i=1}^{i_{l-1}} u_{m_i}(t) \leq 
\sum_{i=1}^{i_l} u_{m_i}(t).\]
Moreover, \(i_k\) satisfies
\(1 \leq (1 + \epsilon_{m_{i_k}}) \sum_{i=1}^{i_k} u_{m_i}(t)\).

It is clear that this process can not carry on after the \(k\)-th stage.
We now obtain the following estimates:
\begin{align}
1 &\leq (1 + \epsilon_{m_{i_k}}) \sum_{i=1}^{i_k} u_{m_i}(t)=
(1 + \epsilon_{m_{i_k}}) \biggl ( \sum_{i=1}^{i_k -1 } u_{m_i}(t) +  u_{m_{i_k}}(t) \biggr ) 
\notag \\
&\leq (1 + \epsilon_{m_{i_k}}) \biggl ( (1 + \epsilon_{m_{i_{k-1}}}) \sum_{i=1}^{i_{k-1}} u_{m_i}(t)
+ u_{m_{i_k}}(t) \biggr ), 
\text{ since } i_k \text{ is minimal}, \notag \\
&= \biggl (\prod_{j= k-1}^k (1 + \epsilon_{m_{i_j}}) \biggr ) \sum_{j=1}^{i_{k-1}} u_{m_j}(t) +
\biggl ( \prod_{j= k}^k (1 + \epsilon_{m_{i_j}}) \biggr )u_{m_{i_k}}(t). \notag
\end{align}
A similar argument, using the minimality of \(i_{k-1}\), yields that
\begin{align}
1 &\leq \biggl ( \prod_{j= k-2}^k (1 + \epsilon_{m_{i_j}}) \biggr ) \sum_{j=1}^{i_{k-2}} u_{m_j}(t) +
\notag \\
&+ \biggl ( \prod_{j= k-1}^k (1 + \epsilon_{m_{i_j}}) \biggr ) u_{m_{i_{k-1}}}(t) +
\biggl ( \prod_{j= k}^k (1 + \epsilon_{m_{i_j}}) \biggr ) u_{m_{i_k}}(t). \notag
\end{align}
We continue in this fashion, using the minimality of \(i_l\) for \(2 \leq l \leq k\) and after
\(k-1\) steps, we reach the estimate
\begin{align}
1 &\leq \biggl ( \prod_{j=1}^k (1 + \epsilon_{m_{i_j}}) \biggr ) \sum_{j=1}^{i_1} u_{m_j}(t) +
\sum_{j=2}^k 
\biggl ( \prod_{l=j}^k (1 + \epsilon_{m_{i_l}}) \biggr ) u_{m_{i_j}}(t) \notag \\
&\leq \biggl ( \prod_{j=1}^k (1 + \epsilon_{m_{i_j}}) \biggr ) \sum_{j=1}^{i_1 - 1} u_{m_j}(t) +
\sum_{j=1}^k \biggl ( \prod_{l=j}^k (1 + \epsilon_{m_{i_l}}) \biggr ) u_{m_{i_j}}(t) \notag \\
&< \rho \prod_{j=1}^k (1 + \epsilon_{m_{i_j}}) +
\biggl (\prod_{j=1}^k (1 + \epsilon_{m_{i_j}}) \biggr ) \sum_{j=1}^k u_{m_{i_j}}(t), \notag \\
&\text{by the minimality of } i_1. \notag
\end{align}
We infer from the above that
\[ 1 < \biggl (\prod_{j=1}^k (1 + \epsilon_{m_{i_j}}) \biggr ) 
\biggl ( \rho + \sum_{j=1}^k u_{m_{i_j}}(t) \biggr ).\] 
However, the choice of the indices \((i_j)_{j=1}^k\) guarantees that
\(\{m_{i_j} : \, j \leq k\} \in \mathcal{F}_M\) and so
\(\{m_{i_j} : \, j \leq k\} \in S_\alpha\). Therefore, by the choice of the
\(u_{m_i}\)'s, we have that 
\[\sum_{j=1}^k u_{m_{i_j}}(t) \leq \biggl \| \sum_{j=1}^k u_{m_{i_j}} \biggr \| < \epsilon\]
and so
\[ 1 < (\rho + \epsilon ) \prod_{j=1}^k (1 + \epsilon_{m_{i_j}}) < 1.\] 
This contradiction yields the assertion of the proposition.
\end{proof}
\begin{proof}[Proof of Theorem \ref{T4}.]
The hypotheses of the theorem and Proposition \ref{P2} yield that
\(\mathcal{F}_M\) is not pointwise compact for every \(M \in [\mathbb{N}]\).
Lemma \ref{L7} now implies that \(K\) is uncountable.
\end{proof}
Our final task is to apply Theorem \ref{T2} to \(E_p\) and show it does
not embed into \(C(\alpha)\) for all \(\alpha < \omega_1\).
\begin{Cor} \label{C5}
Assume that \(\lim_n |F_n| = \infty\).
Then, for every \(p \geq 1 \), \(E_p\) is not isomorphic to a subspace
of a \(C(K)\) space for any countable and compact metric space \(K\).
\end{Cor}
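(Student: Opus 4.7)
The plan is to apply Theorem \ref{T2}. By the theorem of Mazurkiewicz and Sierpinski \cite{ms}, every countable and compact metric space is homeomorphic to some countable ordinal interval, so it suffices to prove that $E_p$ does not embed isomorphically into $C(\alpha)$ for any $\alpha < \omega_1$. Corollary \ref{C1} provides that the natural basis $(e_n)$ of $E_p$ is normalized, shrinking and unconditional, which is exactly what Theorem \ref{T2} requires.

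For the semi-normalized block basis demanded by Theorem \ref{T2}, I will take $x_n = \sum_{k \in F_n} e_k$. Since every element of $\mathcal{M}_p$ meets $F_n$ in at most one point and $e_{\max F_n}^*$ is itself a $p$-measure (take $G_n = F_n$), one has $\|x_n\|_{E_p} = 1$, so $(x_n)$ is semi-normalized.

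The core work is: given $M \in [\mathbb{N}]$, $\alpha < \omega_1$ and $\epsilon > 0$, construct a finitely-supported block basis $(u_m)_{m \in M}$ of $(e_n)$ satisfying conditions (1)--(3) of Theorem \ref{T2}. I will choose a large integer $N$ depending on $\alpha$ and $\epsilon$, use $\lim_n |F_n| = \infty$ to select indices $m_1 < m_2 < \cdots < m_N$ in $M$ for which the $F_{m_i}$ are very large, and define $u_{m_i} = \sum_{k \in H_i} e_k$ where $H_i$ is an initial segment of $F_{m_i}$ with $(|H_i|/|F_{m_i}|)^p = 1/N$, setting $u_m = 0$ otherwise. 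Then $u_m \preceq x_m$ is immediate, verifying (1). Lemma \ref{L1} gives $\|\sum_i u_{m_i}\|_{E_p} \leq 1$, and the $p$-measure $\mu = \sum_i (|H_i|/|F_{m_i}|)^p \, e_{\max H_i}^*$, whose total weight equals $1$, evaluates the sum to $1$; this delivers (2).

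The main obstacle is verifying condition (3): for every $F \subseteq \{m_1, \ldots, m_N\}$ with $F \in S_\alpha$, one must show $\|\sum_{m \in F} u_m\|_{E_p} < \epsilon$. The estimate proceeds by analyzing the action of an arbitrary $\mu \in \mathcal{M}_p$ on $\sum_{m \in F} u_m$: a single $p$-measure contributes an amount controlled by the $\ell_p$-type weight $1/N$ together with the defining constraint $\sum_n (|G_n|/|F_n|)^p \leq 1$, while in an admissible sum $\sum_j \mu_j$ the number of $p$-measures is bounded via the $S_1$-condition on $\{\min F_{l_j}\}$, hence ultimately by $\min F_{m_1}$. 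Since $\lim_n |F_n| = \infty$ forces $\min F_n/n \to \infty$, choosing $m_1$ large makes $\min F_{m_1}$ very large; combining this with the combinatorial structure of $S_\alpha$ via a transfinite induction that exploits the spreading and hereditary properties of the Schreier families yields a bound on $\|\sum_{m \in F} u_m\|$ that can be made less than $\epsilon$ by appropriate choice of $N$ and $m_1$. Once (1)--(3) are verified, Theorem \ref{T2} furnishes the conclusion.
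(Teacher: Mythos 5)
Your setup is right: the paper also applies Theorem \ref{T2} to $x_n=\sum_{i\in F_n}e_i$, and your verification of conditions (1) and (2) for the uniform-weight vectors is correct. But the construction you propose cannot satisfy condition (3), and the sketch you give for it hides the one genuinely nontrivial ingredient. Concretely: to make the sets $F_{m_1},\dots,F_{m_N}$ all large you must take $m_1$ large (the $F_n$ are successive and $|F_n|\to\infty$), and once $m_1\geq N$ the \emph{entire} support $\{m_1,\dots,m_N\}$ lies in $S_1$, hence in $S_\alpha$ for the relevant $\alpha\geq 1$ (the Schreier families are spreading and essentially increasing in $\alpha$). For that choice of $F$ condition (3) demands $\|\sum_{m\in F}u_m\|<\epsilon$ while condition (2) forces this norm to equal $1$. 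No amount of transfinite induction on the spreading/hereditary properties of $S_\alpha$ can rescue uniform weights $1/N$: the obstruction is that the weight vector itself, viewed as a probability measure on $\{m_1,\dots,m_N\}$, assigns full mass to a single $S_\alpha$-set.

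The missing idea is precisely to replace the uniform weights by probability measures that are ``$\epsilon$-spread'' with respect to $S_\alpha$. The paper invokes the result of Argyros, Mercourakis and Tsarpalias \cite{amt}: for $k=\min F_{\min M}$ there exist successive finitely supported probability measures $\mu_1<\dots<\mu_k$ with supports in $M$ such that $\mu_i(F)<\epsilon/2^{p+1}$ for every $F\in S_\alpha$. One then takes $G_{im}\subset F_m$ with $|G_{im}|/|F_m|\approx\mu_i(m)^{1/p}$ and $v_m=(1/k)\sum_{j\in G_{im}}e_j$; the lower bound for (2) is certified not by a single $p$-measure but by the admissible sum $\tau=\sum_{i=1}^k\tau_i$ of $k$ of them (each $\tau_i$ only captures mass about $1/k$), and condition (3) reduces exactly to the bound $(1/k)\sum_i\mu_i(F)<\epsilon/2^{p+1}$. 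Without this input from \cite{amt} (or an equivalent construction of the repeated-averages hierarchy), the argument does not close.
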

\begin{proof}
Let \((e_n)\) be the natural basis for \(E_p\). We know it is normalized, unconditional
and shrinking. We set \(x_n = \sum_{i \in F_n } e_i \) for all \(n \in \mathbb{N}\).
Then, \((x_n)\) is a normalized block basis of \((e_n)\) and thus, it is weakly null. The assertion of the corollary
will be established, once we show that \((x_n)\) satisfies
the assumptions of Theorem \ref{T2}. To this end, let \(M \in [\mathbb{N}]\), \(\alpha < \omega_1\)
and \(\epsilon \in (0,1)\). We may assume, without loss of generality, that
\[ \min M > 2^{p+1}/ \epsilon, \, \text{ and}, \sum_{m \in M} 1/|F_m| < 1/2^{p+1}.\]
Let \(k = \min F_{\min M}\). It is clear that \(1/ k < \epsilon / 2^{p+1}\).
It is shown in \cite{amt} (cf. also, \cite{ag}),
that there exist finitely supported probability measures
\(\mu_1 < \dots < \mu_k\) on \(\mathbb{N}\) so that for all \(i \leq k\),
\[\mathrm{ supp } \, \mu_i \subset M, \text{ and}, \, \mu_i(F) < \epsilon /2^{p+1}, \, \forall \,
F \in S_\alpha.\]
Fix \(i \leq k\). For each \(m \in \mathrm{ supp } \, \mu_i\) we can find an initial segment
\(G_{im}\) of \(F_m\) such that
\begin{equation} \label{E7}
|G_{im}|/|F_m| \leq \mu_i(m)^{1/p} < (|G_{im}|/|F_m|) + (1/|F_m|). 
\end{equation}
We set \(\tau_i = \sum_{ m \in \mathrm{ supp } \, \mu_i} (|G_{im}|/|F_m|)^p
e_{\max G_{im}}^*\). We next let \(\tau = \sum_{i=1}^k \tau_i \).
Since \(\mu_i\) is a probability measure, \eqref{E7} yields that
\(\tau_i\) is a \(p\)-measure for all \(i \leq k\). We also have that
\((\tau_i)_{i=1}^k\) is admissible, as \(k \leq \min F_m\), for all \(m \in M\).
Hence, \(\tau \in \mathcal{M}_p\).
We now define
\[v_m = (1/k) \sum_{j \in G_{im}} e_j , \, \forall \, m \in \mathrm{ supp } \, \mu_i,
\, \forall \, i \leq k.\]
It is clear that \(v_m \preceq x_m \), for all \(m \in \cup_{i=1}^k \mathrm{ supp } \, \mu_i\).
We now have the following estimate
\begin{align}
\tau \biggl ( \sum_{i=1}^k \sum_{m \in \mathrm{ supp } \, \mu_i} v_m \biggr ) &=
\sum_{i=1}^k \sum_{m \in \mathrm{ supp } \, \mu_i} \tau_i(v_m) \label{E8} \\
&= \sum_{i=1}^k (1/ k) \sum_{m \in \mathrm{ supp } \, \mu_i} (|G_{im}|/|F_m|)^p .
\notag
\end{align}
Taking in account \eqref{E7}, we have that
\[\mu_i(m) < \bigl [(|G_{im}|/|F_m|) + (1/|F_m|)\bigr ]^p \leq 2^p (|G_{im}|/|F_m|)^p + (2^p /|F_m|^p),\] 
for all \(m \in \mathrm{ supp } \, \mu_i \) and \(i \leq k\),
and so,
\[1 = \sum_{ m \in \mathrm{ supp } \, \mu_i} \mu_i(m) \leq  
2^p \sum_{ m \in \mathrm{ supp } \, \mu_i} (|G_{im}|/|F_m|)^p 
+ 2^p \sum_{ m \in \mathrm{ supp } \, \mu_i} 1/|F_m|^p, \] 
for all \(i \leq k\).
\eqref{E8} now implies that
\begin{align}
2^p \biggl \|\sum_{i=1}^k \sum_{m \in \mathrm{ supp } \, \mu_i} v_m \biggr \| &\geq
1 - 2^p \sum_{i=1}^k  (1/k) \sum_{ m \in \mathrm{ supp } \, \mu_i} 1/|F_m|^p \notag \\
&\geq 1 - 2^p \sum_{m \in M} (1/|F_m|) > 1 - 2^p(1/2^{p+1}) = 1/2. \notag
\end{align}
Hence, \(D = \|\sum_{i=1}^k \sum_{m \in \mathrm{ supp } \, \mu_i} v_m \| > 1/2^{p+1}\).
We finally set 
\[u_m = (1/D) v_m, \, \forall \, m \in \cup_{i=1}^k \mathrm{ supp } \, \mu_i.\]
Put also \(u_m = 0\), if \(m \notin \cup_{i=1}^k \mathrm{ supp } \, \mu_i\).
It is clear that \(\| \sum_{m \in M} u_m \| = 1\) and that
\(u_m \preceq x_m\) for all \(m \in M\), as \(k > 2^{p+1}\).  

Finally, let \( F \subset M\), \(F \in S_\alpha\).
We need to show that \(\|\sum_{m \in F} u_m \| < \epsilon\).
It will suffice showing that \(\|\sum_{m \in F} v_m \| < \epsilon /2^{p+1}\).
We can assume that \(F \subset \cup_{i=1}^k \mathrm{ supp } \, \mu_i \).

Given \(m \in F\), let \(i_m\) denote the unique \(i \leq k\) with \(m \in \mathrm{ supp } \, \mu_i \).
It follows that
\[\sum_{m \in F} v_m = \sum_{m \in F} \sum_{j \in G_{i_m m}}(1/k) e_j .\]
Let \(\Lambda \in \mathcal{M}_p\). If \(\Lambda = e_l^*\) for some \(l \in \mathbb{N}\),
then clearly,
\[\Lambda \biggl (\sum_{m \in F} v_m \biggr ) \leq 1/k < \epsilon /2^{p+1}.\]
When \(|\mathrm{ supp } \, \Lambda | > 1\), then \(\Lambda\) is of the form
\[\Lambda = \sum_{i=1}^\infty (|G_i|/|F_i|)^p e_{\max G_i }^* ,\]
where each \(G_i\) is an initial segment of \(F_i\).  
It follows now that if
\(\Lambda(v_m) \ne 0\) then
\(\mathrm{ supp } \, \Lambda \cap F_m \ne \emptyset \) and \(G_m \subset G_{i_m m}\).
Thus,
\[\Lambda (v_m ) \leq (|G_{i_m m}|/|F_m|)^p (1/k), \, \forall \, m \in F\]
and so
\begin{align}
\Lambda \biggl (\sum_{m \in F} v_m \biggr ) &\leq (1/k) \sum_{m \in F}
(|G_{i_m m}|/|F_m|)^p \notag \\
&\leq (1/k) \sum_{m \in F} \mu_{i_m}(m), \text{ by } \eqref{E7} \notag \\
&= (1/k) \sum_{i=1}^k \mu_i(F) < \epsilon/2^{p+1}, \notag
\end{align}
as required.    
\end{proof}

\end{document}